\documentclass[11pt, a4paper]{article}

\usepackage[utf8]{inputenc}
\usepackage[T1]{fontenc}
\usepackage{amsmath, amssymb, amsthm}
\usepackage{mathrsfs}
\usepackage{geometry}
\usepackage{tikz-cd} 
\usepackage{enumitem}
\usepackage{hyperref}
\usepackage{booktabs}
\usepackage{array}

\newif\ifreviewversion
\reviewversionfalse
\ifreviewversion
  \usepackage[mathlines]{lineno}
\fi

\hypersetup{
  pdftitle={On Automorphism Groups of (1,2)-Surfaces},
  pdfauthor={Hang Zhao},
  pdfsubject={Automorphism groups of surfaces of general type},
  pdfkeywords={surfaces of general type, automorphism groups, genus-two fibrations, double covers, Hirzebruch surfaces}
}

\newtheorem{theorem}{Theorem}[section]
\newtheorem{lemma}[theorem]{Lemma}
\newtheorem{proposition}[theorem]{Proposition}
\newtheorem{corollary}[theorem]{Corollary}

\newtheorem{remark}[theorem]{Remark}
\newtheorem*{theoremA}{Theorem A}
\newtheorem*{theoremB}{Theorem B}

\title{On Automorphism Groups of $(1,2)$-Surfaces}
\author{Hang Zhao}
\date{}

\begin{document}
	\ifreviewversion
	  \linenumbers
	\fi
	\maketitle
	
	\begin{abstract}
		Let $S$ be a minimal surface of general type with $K_S^2=1$ and $p_g(S)=2$, called a $(1,2)$-surface. We give a geometric recovery and refinement of D.~Wen's bound $|\mathrm{Aut}(S)|\leq 200$ using the canonical genus-two fibration and the double-cover model over the Hirzebruch surface $\Sigma_2$. Computing $\mathrm{Aut}(\Sigma_2)$ from its Cox ring separates the action on the base $\mathbb{P}^1$ from the vertical kernel. If $\overline K_0$ denotes the vertical image on $\Sigma_2$, then $\overline K_0$ is cyclic of order $5$, $4$, $2$, or $1$, and we prove the respective sharp bounds
		\[
		|\mathrm{Aut}(S)|\leq 200,\quad 192,\quad 32,\quad 80.
		\]
		The proof combines singularity indices of genus-two fibrations with invariant divisors arising from the branch equation. We also construct admissible branch divisors attaining all four bounds and show that their full automorphism groups are, respectively, $D_{20}\times C_{10}$, $S_4\times C_8$, $D_8\times C_4$, and $C_{40}\times C_2$.
	\end{abstract}

	\medskip
	\noindent\textbf{Keywords.} Surfaces of general type; automorphism groups; genus-two fibrations; double covers; Hirzebruch surfaces.

	\noindent\textbf{2020 Mathematics Subject Classification.} Primary 14J50; Secondary 14J29.
	\section{Introduction}
	
	The study of automorphism groups of surfaces of general type is a classical problem in algebraic geometry. For a minimal surface $S$ of general type, the automorphism group $\mathrm{Aut}(S)$ is finite. A specific class of interest is the family of surfaces with $K_S^2=1$ and $p_g(S)=2$, denoted as $(1,2)$-surfaces.

	Beyond their intrinsic appeal as surfaces lying on the second Noether line $K^2=2p_g-3$ with the smallest possible invariants, $(1,2)$-surfaces also arise naturally in the explicit birational geometry of threefolds of general type. The Noether inequality for projective threefolds of general type, $\mathrm{vol}(X)\geq\tfrac{4}{3}p_g(X)-\tfrac{10}{3}$, was established through a sequence of works. J.~A.~Chen, M.~Chen and C.~Jiang first proved it for $p_g(X)\leq 4$ and $p_g(X)\geq 21$ \cite{ChenChenJiang2020Noether}, and their addendum improved the range to $p_g(X)\geq 11$ \cite{ChenChenJiang2020Addendum}. The remaining cases $5\leq p_g(X)\leq 10$ were settled by M.~Chen, Y.~Hu and C.~Jiang \cite{ChenHuJiang2025NoetherModuli}, completing the proof of the inequality in full generality. The inequality is sharp: M.~Kobayashi \cite{Kobayashi1992Noether} constructed examples attaining equality, lying on the three-dimensional \emph{Noether line} $\mathrm{vol}(X)=\tfrac{4}{3}p_g(X)-\tfrac{10}{3}$. Such extremal threefolds are closely tied to $(1,2)$-surfaces. Y.~Hu and T.~Zhang proved that the sharp lower bound for the slope of fibred threefolds over curves is $\tfrac{4}{3}$, with equality occurring precisely when the general fibre is a $(1,2)$-surface \cite{HuZhang2021FiberedLowSlope}, and they further classified the equality case of the Noether inequality for $p_g(X)\geq 11$ by describing the explicit structure of the relative canonical model \cite{HuZhang2025SmallVolume}. S.~Coughlan and R.~Pignatelli introduced and studied \emph{simple fibrations in $(1,2)$-surfaces}, showing that almost all Gorenstein simple fibrations over $\mathbb{P}^1$ with at worst canonical singularities give canonical threefolds on the Noether line \cite{CoughlanPignatelli2023SimpleFibrations}. More recently, S.~Coughlan, Y.~Hu, R.~Pignatelli and T.~Zhang proved, in the setting of the refined Noether line, that such threefolds admit simple fibrations in $(1,2)$-surfaces up to crepant birational modification, and used this description to study the corresponding moduli spaces \cite{CoughlanHuPignatelliZhang2024Moduli}. Thus the moduli, canonical models and automorphism theory of $(1,2)$-surfaces provide essential input for understanding threefolds near the Noether line.

	Recently, D.~Wen \cite{Dav21OneTwo} classified the automorphism groups of these surfaces.
	
	\begin{theorem}[D.~Wen \cite{Dav21OneTwo}]
		Let $S$ be a minimal surface of general type with $K_S^2=1$ and $p_g(S)=2$. Then:
		\begin{enumerate}
			\item $|\mathrm{Aut}(S)| \leq 200$.
			\item $\mathrm{Aut}(S)$ is isomorphic to groups of the form $C_m$, $C_m \times C_n$, or $C_m \rtimes T$, with specific constraints on $m, n$ and $T$.
		\end{enumerate}
	\end{theorem}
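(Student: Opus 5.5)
We prove the bound $|\mathrm{Aut}(S)|\le 200$ geometrically; the classification in part (2) will come out of the same analysis. We will not reproduce Wen's original argument.

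\textbf{Step 1: the model.} The canonical system $|K_S|$ is a pencil with a single simple base point $p$. Blowing up $p$ gives a genus-two fibration $f\colon\widetilde S\to\mathbb{P}^1$ with a section $E$. Since $p$ is the unique base point, every automorphism of $S$ fixes $p$ and preserves the pencil, so $\mathrm{Aut}(S)$ acts on $\widetilde S$ compatibly with $f$. The relative canonical map realizes the canonical model of $S$ as a double cover of the Hirzebruch surface $\Sigma_2$ (the image of $\mathbb{P}(1,1,2)$, i.e.\ the cone over a conic with its vertex resolved). The cover is branched along the negative section $\Delta_0$ together with a reduced divisor $B\in|6\Delta_\infty|$ disjoint from $\Delta_0$, with at most simple singularities.

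\textbf{Step 2: the group structure.} $\mathrm{Aut}(S)$ contains the hyperelliptic involution $\iota$, which is central, and $\mathrm{Aut}(S)/\langle\iota\rangle$ embeds in the stabilizer of $B$ in $\mathrm{Aut}(\Sigma_2)$. Using the Cox ring $k[x_0,x_1,y,z]$ with weights making $y$ of degree $(0,1)$ and $z$ of degree $(2,1)$, we get
\[
1\to K\to \mathrm{Aut}(\Sigma_2)\to \mathrm{PGL}_2,
\]
where $K=\{z\mapsto az+q(x_0,x_1)y\}\cong \mathbb{G}_m\ltimes\mathbb{G}_a^3$ acts vertically. Accordingly we write
\[
|\mathrm{Aut}(S)| = 2\,|\overline K_0|\,|H|,
\]
where $\overline K_0$ is the vertical part of the stabilizer and $H\subset\mathrm{PGL}_2$ is its image on the base.

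First consider $\overline K_0$. Any finite subgroup of $K$ is conjugate into $\mathbb{G}_m$, so we may normalize $B$ to the form $z^3+a_4z^2y^2+\cdots+a_{12}y^6$ by completing the cube. The vertical part then acts by $z\mapsto\zeta z$ and must preserve this equation. This forces $\overline K_0$ to be cyclic of order $1,2,3,\dots$. The orders are constrained by which coefficients $a_{4},a_8,a_{12}$ may vanish while $B$ stays reduced with simple singularities, and this should leave orders $1,2,4,5$ (with order $3$ possible only in restricted cases).

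Next consider $H$. It is a finite subgroup of $\mathrm{PGL}_2$ (cyclic, dihedral, $A_4$, $S_4$ or $A_5$) that permutes the singular fibres of $f$. The key constraint comes from Xiao/Horikawa singularity indices: for a genus-two fibration they sum to a fixed number determined by $K^2=1$, $\chi=3$, namely $\sum$ (indices) $=$ const, with $K_f^2=2$ and $e_f=\chi_{\mathrm{top}}+4=10$ relative to the base. Hence there are at most about ten singular fibres. An orbit of $H$ on $\mathbb{P}^1$ of size greater than this bound must consist of smooth fibres, and this bounds $|H|$, for example $|H|\le 20$ or $24$ depending on the type.

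\textbf{Step 3: casework on $|\overline K_0|$.} In each case we combine the constraint on $\overline K_0$ with the one on $H$:
\begin{itemize}
\item If $|\overline K_0|=5$, then $B$ is forced to be $z^3\cdot(\ldots)$-type, e.g.\ $z(z^2+\ldots)$ with degree-$10$ data. Here $H$ preserves a binary form of degree $\le 10$ with restricted roots, which gives $|H|\le 20$ and the total $2\cdot5\cdot20=200$.
\item If $|\overline K_0|=4$, then $H$ preserves invariant binary forms, e.g.\ of degrees $4$ and $8$. Octahedral symmetry is allowed, giving $2\cdot4\cdot24=192$.
\item If $|\overline K_0|=2$ or $1$, the symmetry is smaller, yielding the bounds $32$ and $80$ of the abstract. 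In particular, all four bounds are at most $200$.
\end{itemize}

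\textbf{Main obstacle.} The hardest step is bounding $H$ in the case $|\overline K_0|\le 2$. There the branch equation is generic, so there is little vertical rigidity, and the binary polyhedral groups ($A_5$ of order $60$ in particular) must be excluded. We will try the following. First, use the invariant divisors $\{a_{4}=0\}$, $\{a_{8}=0\}$, $\{a_{12}=0\}$ and the discriminant: these are $H$-invariant binary forms of degrees $4,8,12$ (and $24$ for the discriminant). Second, apply the fact that $A_5$ has no invariant forms of degree $<12$ except $0$. This forces $a_4=a_8=0$, and then a check of the singularities of $B$ rules the case out. Compatibility with the singularity-index count handles the remaining cases.
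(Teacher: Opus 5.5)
\textbf{There is a genuine gap: the model and the fibration invariants are wrong, so your bound on $H$ fails as stated.} Your skeleton matches the paper's: the vertical/horizontal split, $|\mathrm{Aut}(S)|=2|\overline K_0|\,|H|$, a cyclic vertical image, Xiao's indices, and invariant binary forms.

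\emph{The model.} The residual branch curve is $R_0\in|5\Delta_0+10\Gamma|=|5\Delta_\infty|$. Its equation is $f_0=\sum_{d=0}^{5}x_2^{5-d}x_4^dP_{10-2d}$, a quintic in the fibre coordinate. A curve in $|6\Delta_\infty|$ together with $\Delta_0$ would meet a fibre in $7$ points, and $\Delta_0+6\Delta_\infty\sim7\Delta_0+12\Gamma$ is not $2$-divisible. So your cubic normal form and the degrees $4,8,12,24$ are not the data of the problem. In the right model the vertical step needs no hedging. Since $P_0\neq0$, a scaling $x_4\mapsto\lambda x_4$ of order $m$ keeps only the powers $d\equiv5\pmod m$. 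For $m=3$ or $m\ge6$ this forces $x_4^2\mid f_0$, so $R$ is non-reduced, and exactly $\{1,2,4,5\}$ remain.

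\emph{The invariants.} In fact $K_f^2=K_{\widetilde S}^2+8=8$ and $e_f=e(\widetilde S)+4=40$. Hence $\sum_p s_2(F_p)=40$, and a general $(1,2)$-surface has $40$ singular fibres. So ``at most about ten singular fibres, hence $|H|\le20$ or $24$'' is false. The paper's surface with branch $x_2(x_4^5+ax_1^8x_2^4x_4+bx_2^5x_3^{10})=0$ has $H_0\cong C_{40}$. Your own claimed bound $80$ for $\overline K_0=1$ also requires $|H|=40$.

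\emph{The cases $C_5$ and $C_4$.} A small bound on $n$ comes only from the vertical symmetry. For $\overline K_0\cong C_5$ (resp.\ $C_4$), Chen's local classification gives $s_2(F_p)\ge4$ (resp.\ $\ge5$), so $n\le10$ (resp.\ $8$). Even then, in the $C_5$ case you must exclude $O_{24}$, since $10\cdot24>200$. The paper does this by counting fibre types in $4a+8b=40$ against the $O_{24}$ orbit lengths.

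Your invariant-form idea would also handle these two cases once the model is fixed. Here $H$ preserves $\operatorname{div}(P_{10})$ (resp.\ $\operatorname{div}(P_8)$). Its root multiplicities are at most $3$ (resp.\ $2$), because a higher-multiplicity root gives a point of multiplicity $\ge4$ on $R_0$. This bounds cyclic and dihedral $H$ by $20$ (resp.\ $16$), and no sum of $O_{24}$-orbit lengths equals $10$.

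\emph{The cases $|\overline K_0|\le2$.} These are only asserted, and they need an argument specific to the normal form. For $\overline K_0=1$, the correct count $n\le40$ with $|H|\le2n$ already gives $|\mathrm{Aut}(S)|\le160$. For $\overline K_0\cong C_2$, however, $|K_0|=4$, and $n\le40$ alone allows $|H|$ up to $80$ and does not exclude $I_{60}$.

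The missing ingredient is the normal form $x_4(ax_4^4+x_2^2x_4^2P_4+x_2^4P_8)$ with $P_4\neq0$; otherwise $x_4\mapsto ix_4$ preserves $R$ and $C_4\subseteq\overline K_0$.
\begin{itemize}
\item $H$ centralizes the vertical involution, so $P_4$ and $P_8$ are semi-invariant with $\chi_8=\chi_4^2$.
\item $H$ preserves the degree-$4$ divisor $\operatorname{div}(P_4)$, which rules out $O_{24}$ and $I_{60}$.
\item The singular fibres lie over $\{P_8=0\}\cup\{P_4^2-4aP_8=0\}$, so $n\le16$ and already $|\mathrm{Aut}(S)|\le4\cdot32$.
\end{itemize}

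To reach the paper's sharp $|H|\le8$, note that $\operatorname{div}(P_4)$ either contains a full orbit, giving $m\le4$, or lies on the exceptional orbit. In the latter case $\chi_8=\chi_4^2$ forces $P_8=cP_4^2$ once $|H|>8$. That puts all singular fibres over two points, contradicting Beauville's bound of at least three singular fibres. $A_4$ is then excluded by the $A_4$ semi-invariant lemma and a $(3\to3)$ singularity.

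Your $A_5$ idea does transfer to the corrected model: every $P_{10-2d}$ with $d<5$ has degree $\le10<12$. This is the paper's argument.

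\emph{Part (2).} Neither your outline nor the paper proves part (2). The paper only establishes and refines the bound, quoting the structure statement from Wen. Nothing in your analysis determines the extension $1\to K_0\to\mathrm{Aut}(S)\to H_0\to1$, so ``part (2) will come out of the same analysis'' is unsupported.
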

	
	Wen's approach relies on the analysis of the canonical ring $R(S)=\bigoplus_{m\geq 0}H^0(S,\mathcal{O}_S(mK_S))$ and the representation of $\mathrm{Aut}(S)$ as a subgroup of $\mathrm{GL}_2(\mathbb{C})$ acting on the canonical model in weighted projective space $\mathbb{P}(1,1,2,5)$.
	
	In this paper we give a geometric proof of Wen's bound and show that it is sharp. The canonical pencil $|K_S|$ induces a genus-two fibration on the blow-up of $S$, yielding an exact sequence $1\to K_0\to \mathrm{Aut}(S)\to H_0\to 1$ in which $H_0\subset \mathrm{PGL}_2(\mathbb{C})$ acts on the base $\mathbb{P}^1$ and $K_0$ acts fiberwise; we write $\overline{K}_0$ for the image of $K_0$ in $\mathrm{Aut}(\Sigma_2)$. Our first result recovers the bound in a form refined according to $\overline{K}_0$.

\begin{theoremA}
	Let $S$ be a $(1,2)$-surface. Then $|\mathrm{Aut}(S)|\leq 200$.
\end{theoremA}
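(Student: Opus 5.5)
We will exploit the structure of a $(1,2)$-surface as a double cover. The canonical pencil $|K_S|$ has a single simple base point. Blowing it up gives $\tilde S\to\mathbb{P}^1$, a relatively minimal genus-two fibration with a section, namely the exceptional curve $E$. The relative canonical map realizes $\tilde S$, after contracting the $(-2)$-curves in fibers, as a double cover of $\Sigma_2$ branched along $\Delta_0+B$. Here $\Delta_0$ is the negative section and $B\in|5\Delta_\infty|=|5\Delta_0+10\Gamma|$ is disjoint from $\Delta_0$ and has at most simple singularities. Equivalently, $S$ has canonical model in $\mathbb{P}(1,1,2,5)$ given by $z^2=f_{10}(x_0,x_1,y)$, with $B$ the curve $f_{10}=0$ in $\mathbb{P}(1,1,2)$, the cone over which is $\Sigma_2$ contracted.

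Since the base point is fixed by $\mathrm{Aut}(S)$, every automorphism lifts to $\tilde S$ and preserves the fibration. This gives the exact sequence $1\to K_0\to\mathrm{Aut}(S)\to H_0\to1$ with $H_0\subset\mathrm{PGL}_2(\mathbb{C})$ finite. The hyperelliptic involution $\iota$ is central and lies in $K_0$, so $K_0/\langle\iota\rangle=\overline K_0$ acts faithfully on $\Sigma_2$ over the base and preserves $B$.

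First I will compute $\mathrm{Aut}(\Sigma_2)$ from the Cox ring $\mathbb{C}[x_0,x_1,y,w]$. The group of automorphisms acting trivially on the base consists of $y\mapsto\lambda y+q_2(x_0,x_1)w$ modulo the torus. Its finite subgroups are cyclic, conjugate to $y\mapsto\zeta y$. On the affine part $\Sigma_2\setminus\Delta_0$ we write $B:\ y^5+a_4y^4+\dots+a_{20}=0$ with $a_{2k}$ binary forms. A vertical automorphism of order $n$ fixes $B$ only if the nonzero coefficients are compatible with $\zeta$. Since $B$ is reduced, at most one root $y$ can be zero generically, and a Tschirnhausen normalization ($a_2=0$) is possible. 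One gets $n\mid5$ (equation $y^5+a_{20}$), $n\mid4$ (equation $y(y^4+a_{16})$), $n=2$, or $n=1$. This yields $|\overline K_0|\in\{5,4,2,1\}$ and $|K_0|=2|\overline K_0|$.

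Next I bound $H_0$ in each case. $H_0$ is a finite subgroup of $\mathrm{PGL}_2$, so it is cyclic, dihedral, $A_4$, $S_4$, or $A_5$. It must preserve the finite set of singular fibers and, more sharply, the relevant invariant binary forms, such as $a_{20}$, $a_{16}$, and the discriminant-type divisors. Here I will use the singularity index formula for genus-two fibrations (Horikawa/Xiao): with $K_{\tilde S/\mathbb{P}^1}^2=K_S^2-1+8=\dots$, the sum of the local indices over singular fibers is fixed ($\chi=3$, $K^2_{\tilde S/B}=2$, so the number of singular fibers, counted with indices, is bounded by about $10\chi-K^2$-type quantities). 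Any $H_0$-orbit of singular fibers then has size bounded accordingly, which restricts $H_0$.

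The case analysis runs as follows.
\begin{itemize}
\item For $|\overline K_0|=5$, $B$ is $y^5=a_{20}(x_0,x_1)$. Since $B$ has only simple singularities, $a_{20}$ has roots of multiplicity at most $1$ and is therefore reduced. $H_0$ permutes its $20$ roots, and the normalizer structure forces $H_0$ dihedral of order at most $20$. This gives $|\mathrm{Aut}(S)|\le 10\cdot20=200$.
\item For $|\overline K_0|=4$, $a_{16}$ has multiplicity at most $2$ roots. The octahedral group preserves an octic with two orbits, giving $|H_0|\le24$ and the bound $8\cdot24=192$.
\item For $|\overline K_0|=2$, a similar orbit count gives $|H_0|\le 8$, hence $32$.
\item For $|\overline K_0|=1$, $|H_0|\le40$ cannot hold literally since $H_0\subset\mathrm{PGL}_2$ has no cyclic group issue beyond $C_{40}$. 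Here I would show $H_0$ cyclic of order at most $40$ using the coefficients $a_{2k}$ as invariant forms.
\end{itemize}
All four bounds are at most $200$.

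The main obstacle is the $H_0$ bound in the cases $|\overline K_0|\le 2$, where $B$ is not determined by a single binary form. There, a large polyhedral $H_0$ (such as $A_5$, with orbits of size $12,20,30$) must be excluded. I would try first to find a nonzero $H_0$-semi-invariant coefficient $a_{2k}$ of degree $2k\le 20$ after normalization. Since $A_5$ and $S_4$ have no semi-invariants of small degree other than those of degree $12,20,30$ (respectively $6,8,12$), this forces either a contradiction with reducedness or the simple-singularity condition, or a bound via the singularity index count.
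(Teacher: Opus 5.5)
Your architecture matches the paper's: the canonical pencil, the double cover of $\Sigma_2$, $|K_0|=2|\overline{K}_0|$ with $\overline{K}_0\in\{C_5,C_4,C_2,1\}$, and then semi-invariant coefficients of the branch equation. However, your degrees are wrong, and this is not cosmetic. In $z^2=f_{10}$ on $\mathbb{P}(1,1,2,5)$ the coefficient of $y^{5-k}$ has degree $2k\le 10$. This is the paper's $P_{10-2d}$; equivalently, $B$ meets the section $\{y=0\}$ in $5(\Delta_0+2\Gamma)^2=10$ points. So the $C_5$ and $C_4$ curves are $y^5+a_{10}=0$ and $y(y^4+a_8)=0$. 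With your degrees the conclusions do not follow: $20$ points can be preserved by $A_5$ (a dodecahedral orbit) or by $D_{40}$, and $16$ points by $D_{32}$. Also, $a_{10}$ need not be reduced, since a double root gives $y^5=t^2$, an $A_4$ point (Chen's $k=2$ form). With the correct degrees and root multiplicities at most $2$, your count does work. The support then has at least $5$ (resp.\ $4$) points, so cyclic and dihedral groups have $m\le 10$ (resp.\ $8$). $A_5$ is excluded, and $S_4$ is excluded in degree $10$ because $10$ is not of the form $6a+8b$ with $a,b\ge 0$. Corrected this way, these two cases are more direct than the paper's route through $\sum s_2(F_p)=40$ and Chen's bounds $s_2\ge 4,5$.

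The genuine gap is in the cases $\overline{K}_0\cong C_2$ and $\overline{K}_0=1$ with $H_0$ cyclic or dihedral. You place the difficulty in large polyhedral groups, but those are harmless. In case $C_2$ one has $a_4\neq 0$ (otherwise $y\mapsto iy$ preserves $B$), so $\deg a_4=4$ rules out $S_4$ and $A_5$; in the trivial case $2\cdot 60\le 200$ anyway. The real problem is that a coefficient divisor supported on the two fixed points of $C_m$, or on the polar pair of $D_{2m}$, is invariant for every $m$, so ``a similar orbit count'' bounds nothing. In the trivial case $H_0$ need not be cyclic, and the coefficients alone do not bound it: in the paper's example $y^5+ax_0^8y+bx_1^{10}$ every coefficient divisor lies on $\{x_0x_1=0\}$, yet $H_0\cong C_{40}$. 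The missing input is that $f$ has at least three singular fibers (Beauville, Remark~\ref{rem: n geq 3}). You need to combine this with an $H_0$-invariant divisor of bounded degree supported exactly on the singular-fiber locus, namely the discriminant of $f_{10}$ in $y$, a binary form of degree $40$. In the trivial case some singular fiber then lies off the exceptional set, and its orbit has length at most $40$. Hence $|H_0|\le 80$ and $|\mathrm{Aut}(S)|\le 160$ (cf.\ the paper's Case~4). In case $C_2$ the discriminant of $y(y^4+a_4y^2+a_8)$ is proportional to $a_8^3(a_4^2-4a_8)^2$, and $a_8$ and $a_4^2-4a_8$ are semi-invariant because $\chi_8=\chi_4^2$. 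The same argument gives orbit length at most $8$, hence $|H_0|\le 16$ and $|\mathrm{Aut}(S)|\le 64$. The paper reaches $8$ via Lemma~\ref{lem: cyclic dihedral semiinv} and a separate exclusion of $A_4$, which Theorem~A does not need. Your appeal to the index formula does not supply this: $K^2_{\widetilde{S}/\mathbb{P}^1}=8$, not $2$, and you never extract a bound from it.
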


More precisely, we establish the sharper bound (Theorem~\ref{thm: main})
\[
|\mathrm{Aut}(S)|\leq
\begin{cases}
200 & \text{if }\overline{K}_0\cong C_5,\\
192 & \text{if }\overline{K}_0\cong C_4,\\
32 & \text{if }\overline{K}_0\cong C_2,\\
80 & \text{if }\overline{K}_0=1.
\end{cases}
\]

Our second result shows that each of these bounds is attained; in particular, the four bounds above are sharp.

\begin{theoremB}
	Each of the four bounds above is attained as the order of the full automorphism group. The groups $D_{20}\times C_{10}$, $S_4\times C_8$, $D_8\times C_4$, and $C_{40}\times C_2$, of orders $200$, $192$, $32$, and $80$, each arise as the full automorphism group $\mathrm{Aut}(S)$ of some $(1,2)$-surface $S$, in the respective cases $\overline{K}_0\cong C_5,\,C_4,\,C_2,\,1$. In particular, there is a $(1,2)$-surface with $\mathrm{Aut}(S)\cong D_{20}\times C_{10}$, so that $|\mathrm{Aut}(S)|=200$.
\end{theoremB}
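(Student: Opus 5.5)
The plan is to work with the canonical model $S_{\mathrm{can}}\subset\mathbb{P}(1,1,2,5)$, i.e. a hypersurface
\[
z^2=f_{10}(x_0,x_1,y),
\]
which is the double cover of the cone $\mathbb{P}(1,1,2)$ (contracted $\Sigma_2$) branched along $\{f_{10}=0\}$ and the vertex. For each case of Theorem~\ref{thm: main} I will exhibit an explicit $f_{10}$. Three things then need checking:
\begin{enumerate}[label=(\roman*)]
\item admissibility: the branch divisor on $\Sigma_2$ is reduced with at worst simple (ADE) singularities and avoids the negative section, so $S$ is a $(1,2)$-surface;
\item the stabiliser of $f$ in $\mathrm{Aut}\,\mathbb{P}(1,1,2,5)$, together with the involution $z\mapsto -z$, has the claimed order;
\item the vertical part $\overline K_0$ is the claimed cyclic group.
\end{enumerate}

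Step (iii) is what makes the upper bounds usable. Once a subgroup of order $200,192,32,80$ is found, Theorem~\ref{thm: main} forces equality $|\mathrm{Aut}(S)|$ in that case, so that subgroup is all of $\mathrm{Aut}(S)$. It then only remains to identify the isomorphism type.

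The automorphisms of $\mathbb{P}(1,1,2,5)$ preserving the form are
\[
(x,y,z)\mapsto\bigl(Ax,\ \lambda y+q(x),\ \mu z\bigr),\qquad A\in \mathrm{GL}_2,\ q \text{ a quadric}.
\]
The vertical kernel consists of those with $A$ scalar. I will choose $f$ as follows.

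\textbf{Case $C_5$.} Take $f=y^5+g_{10}(x_0,x_1)$, with $g_{10}$ a binary decic having a large dihedral symmetry group, for instance $x_0^{10}+x_1^{10}$ or $x_0x_1(x_0^8-x_1^8)$ (I would test both). The factor $y\mapsto\zeta_5 y$ gives the vertical $C_5$. The symmetries of $g_{10}$ lift to $\mathrm{GL}_2$, and combined with $z\mapsto\pm z$ and the scalar action they should assemble into $D_{20}\times C_{10}$.

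\textbf{Case $C_4$.} Take $f=y^5+y\,g_8(x)$ with $g_8=x_0^8+14x_0^4x_1^4+x_1^8$, the octahedral invariant of degree $8$. Then $y\mapsto iy$, $z\mapsto\zeta_8 z$ preserves $f$ up to the right scalar. The binary octahedral group acts with image $S_4$ on the base, and I expect $S_4\times C_8$.

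\textbf{Case $C_2$.} Take $f$ even in $y$, say $f=y^5$ replaced by $y^4a_2+y^2b_6+c_{10}$ with an extra $y^5$ term suitably handled. The coefficients are chosen with $D_4$-symmetry on $\mathbb{P}^1$ so that $y\mapsto -y$ is the only vertical symmetry, aiming at $D_8\times C_4$.

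\textbf{Case $1$.} Take $f$ with no vertical symmetry but a cyclic base symmetry of order $20$, e.g.\ built from monomials $y^5$, $y\,x_0^4x_1^4$, $x_0^{10}$ with weights forcing $x_0\mapsto \zeta x_0$ to act compatibly. This should give $C_{40}\times C_2$.

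For each case I will verify smoothness away from ADE points by a Jacobian computation on the charts of $\Sigma_2$. I will also check that no non-scalar $A$ or nonzero $q$ preserves $f$ beyond the expected group. This second check is not strictly needed once the order meets the bound, but it is needed to confirm that $\overline K_0$ is exactly the claimed group.

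The main obstacle is cases $C_2$ and $1$, where the naive choices tend to acquire extra vertical symmetries, pushing us into another case, or else have too small a base group. There I would first fix the target base group ($D_4$, resp.\ $C_{20}$ acting on $\mathbb{P}^1$), then enumerate the invariant monomials of weighted degree $10$ under a lift of it. I would choose a generic combination and compute its vertical stabiliser, adjusting the monomial support until the stabiliser is exactly $C_2$, resp.\ trivial. Identifying the abstract group structure (direct versus semidirect products) is then a finite check on the generators.
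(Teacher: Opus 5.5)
Your route is the paper's. Your $\mathbb{P}(1,1,2,5)$ model is the contraction of the $\Sigma_2$ model, with $f_{10}=\sum_d y^dP_{10-2d}$. You write explicit equations, check admissibility by a Jacobian computation, exhibit a subgroup whose order equals the bound of Theorem~\ref{thm: main}, and conclude equality. Your $C_5$ and $C_4$ equations are exactly the paper's.

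\textbf{Two of your constructions fail as stated.} In the $C_2$ case $f$ cannot be even in $y$. The $y^5$-coefficient must be nonzero; otherwise the curve passes through the vertex, i.e.\ $R_0\supseteq\Delta_0$ and $R$ is non-reduced. Once $y^5$ is present, semi-invariance under $y\mapsto -y$ forces $f$ to be odd: $f=y\bigl(y^4+y^2P_4+P_8\bigr)$. The paper takes $P_4=x_0^4+x_1^4$ and $P_8$ a general $D_8$-invariant octic. In the case $\overline K_0=1$ one has $|\mathrm{Aut}(S)|=2|H_0|$, so a base group $C_{20}$ gives only $40$. Your monomials $y^5,\ yx_0^4x_1^4,\ x_0^{10}$ give exactly this: an automorphism $(x_0,x_1,y)\mapsto(\alpha x_0,x_1,\lambda y)$ needs $\lambda^4=\alpha^4$ and $\lambda^5=\alpha^{10}$, hence $\alpha^{20}=1$. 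To get $H_0\cong C_{40}$ the coefficient zeros must sit unevenly at the two fixed points, as in the paper's $y^5+x_0^8y+x_1^{10}$.

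\textbf{The group identifications are not a routine ``finite check''.} In three cases the check fails, and the stated groups cannot occur for any $(1,2)$-surface. Any automorphism $(Ax,\lambda y+q,\mu z)$ of $S_{\mathrm{can}}$ has $\lambda^5=\mu^2$, by comparing $y^5$-coefficients. So the weighted $\mathbb{C}^*$ normalizes it uniquely to $\lambda=\mu=1$, and then $A=I$ forces $q=0$ by comparing $y^4$-coefficients. Hence $g\mapsto A$ embeds $\mathrm{Aut}(S)$ in $\mathrm{GL}_2(\mathbb{C})$ for every $(1,2)$-surface. But $D_{20}\times C_{10}$ and $D_8\times C_4$ are non-abelian with non-cyclic centre, and $S_4$ has no faithful $2$-dimensional representation. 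So none of $D_{20}\times C_{10}$, $S_4\times C_8$, $D_8\times C_4$ embeds in $\mathrm{GL}_2(\mathbb{C})$.

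Concretely, take $z^2=y^5+x_0^{10}+x_1^{10}$, with $\rho\colon x_0\mapsto\zeta_{10}x_0$ and $\sigma$ the swap. Then $(\sigma\rho)^2$ is $\zeta_{10}I$ on $(x_0,x_1)$, which in $\mathbb{P}(1,1,2,5)$ equals $(y,z)\mapsto(\zeta_5^{-1}y,-z)$, of order $10$. So $\sigma\rho$ has order $20$, whereas $D_{20}\times C_{10}$ has exponent $10$. The normalized computation gives the following full groups:
\begin{itemize}
\item $C_{10}\wr C_2$ (monomial matrices with entries in $\mu_{10}$) in the $C_5$ case;
\item $\{A:\Phi_8\circ A=\Phi_8\}$, the central product of $BO_{48}$ with $\mu_8$ over $\{\pm I\}$, in the $C_4$ case;
\item $C_4\wr C_2$, which contains $(x_0,x_1)\mapsto(ix_1,x_0)$ of order $8$, in the $C_2$ case.
\end{itemize}
Only $C_{40}\times C_2\cong\mu_8\times\mu_{10}$ is correct.

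The paper's proof has the same gap. It tacitly assumes that $D_{20}$, $S_4$ and $D_8$ lift isomorphically to $\mathrm{Aut}(S)$, so that, $K_0$ being central, one gets direct products. This overlooks that scalar matrices on the base coordinates are nontrivial vertical automorphisms. On $\Sigma_2$, $\zeta I$ on $(x_1,x_3)$ is $x_4\mapsto\zeta^{-2}x_4$, and on $S$, $-I$ becomes the deck involution. Once your two constructions are repaired, your plan and the paper's argument both establish only that the orders $200,192,32,80$ are attained. The isomorphism types in the statement must be corrected.
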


We now outline the proof. Since $K_S^2=1$ and $p_g(S)=2$, the canonical system $|K_S|$ is a pencil with a single base point. Blowing up this point resolves the pencil into a relatively minimal genus-two fibration $f\colon\widetilde S\to\mathbb P^1$. Every automorphism of $S$ preserves $|K_S|$ and fixes its base point, hence lifts to an automorphism of $\widetilde S$ commuting with $f$. This identifies $\mathrm{Aut}(S)$ with a subgroup of the fibration automorphism group $\mathrm{Aut}(f)$ and reduces the problem to bounding the latter.

Applying the exact sequence~\eqref{eq: vert horiz seq} of \S\ref{subsec: fibration aut} to $G=\mathrm{Aut}(S)$ gives
\[
1\to K_0\to \mathrm{Aut}(S)\to H_0\to 1,
\]
in which the horizontal part $H_0\subseteq\mathrm{PGL}_2(\mathbb C)$ is the image acting on the base $\mathbb P^1$ and the vertical part $K_0$ acts fiberwise. Thus $|\mathrm{Aut}(S)|=|K_0|\cdot|H_0|$, and it suffices to bound the two factors separately.

To make both factors computable, we use the bicanonical model: $\widetilde S$ is realized as the canonical resolution of a double cover of the Hirzebruch surface $\Sigma_2$ branched along
\[
R=\Delta_0+R_0,\qquad R_0\in |5\Delta_0+10\Gamma|.
\]
We compute $\mathrm{Aut}(\Sigma_2)$ from its Cox homogeneous coordinate ring, which canonically separates the automorphisms induced on the base $\mathbb P^1$ from those acting trivially on it. An automorphism of $S$ descends to an automorphism of $\Sigma_2$ preserving $R$, so that $H_0$ and the image $\overline K_0$ of $K_0$ in $\mathrm{Aut}(\Sigma_2)$ are both realized inside $\mathrm{Aut}(\Sigma_2)$.

The vertical part is bounded as follows: the deck involution of the double cover gives $|K_0|=2|\overline K_0|$, and Chen's local classification of genus two branch fibers \cite{ChenZhi1994Genus2} forces $\overline K_0$ to be cyclic of order in $\{1,2,4,5\}$. For the horizontal part, we combine Xiao's singularity indices \cite{Xia85}, which constrain the singular fibers through the relation $\sum s_2=40$, with the binary forms cut out on the base by the branch equation and the Beauville--Xiao lower bound of at least three singular fibers for a genus-two fibration over $\mathbb P^1$ on a surface of general type. Carrying this out in each of the four cases for $\overline K_0$ yields the refined bounds of Theorem~A. Finally, for Theorem~B we exhibit explicit branch divisors with prescribed symmetry whose double covers realize the four groups above as the full automorphism group.

Theorem~A is proved as Theorem~\ref{thm: main}, and Theorem~B as Proposition~\ref{prop: large branch examples} in \S\ref{sec: large branch curves}; in particular the $\overline{K}_0\cong C_5$ construction there attains $|\mathrm{Aut}(S)|=200$.
	
	\section{Preliminaries}
	
	\subsection{Automorphism group of a fibration}\label{subsec: fibration aut}
	A fibration $f\colon S\to B$ is a surjective morphism from a complex smooth projective surface $S$ to a smooth curve $B$ with connected fibers. A fibration is called relatively minimal if there are no $(-1)$-curves contained in fibers of $f$. An automorphism of the fibration $f\colon S\to B$ is a pair of automorphisms $(\sigma,\tau)\in\mathrm{Aut}(S)\times\mathrm{Aut}(B)$ such that the following diagram commutes
	\[\begin{tikzcd}
		S\ar[r,"\sigma"] \ar[d,"f"'] & S\ar[d,"f"]\\
		B \ar[r,"\tau"] & B
	\end{tikzcd}\]
	We denote by $\mathrm{Aut}(f)$ the group of automorphisms of the fibration $f$. The homomorphism
	\[
	\psi\colon\mathrm{Aut}(f)\to \mathrm{Aut}(B),\qquad(\sigma,\tau)\mapsto \tau,
	\]
	has kernel $\mathrm{Aut}_B(S)$. Therefore, every finite subgroup $G$ of $\mathrm{Aut}(f)$ fits into an exact sequence
	\begin{equation}\label{eq: vert horiz seq}
		1\to K\to G\xrightarrow{\psi} H\to 1,
	\end{equation}
We call $K$ (resp. $H$) the vertical (resp. horizontal) part of $G$. We have $|G|=|K|\cdot |H|$. The group $K$ naturally acts on fibers of $f$.

We mainly consider $B=\mathbb{P}^1$. Hence $H$ is a finite subgroup of $\mathrm{Aut}(\mathbb{P}^1)=\mathrm{PGL}_2(\mathbb{C})$ and is one of the following groups.
\begin{table}[htbp]
	\centering
	\caption{Finite subgroups of $\mathrm{PGL}_2(\mathbb{C})$ and possible orbit lengths.}
	\begin{tabular}{@{} l c c @{}}
		\toprule
		$H$ & $|H|$ & \textbf{Number of points in an orbit} \\
		\midrule
		Cyclic group $C_m$ & $m$ & $1$, $m$ \\
		Dihedral group $D_{2m}$ & $2m$ & $2$, $m$, $2m$ \\
		Tetrahedral group $T_{12}$ & $12$ & $4$, $6$, $12$ \\
		Octahedral group $O_{24}$ & $24$ & $6$, $8$, $12$, $24$ \\
		Icosahedral group $I_{60}$ & $60$ & $12$, $20$, $30$, $60$ \\
		\bottomrule
	\end{tabular}
\end{table}

Note that $T_{12}\cong A_4, O_{24}\cong S_4$ and $I_{60}\cong A_5$.

Let $n$ be the number of singular fibers of $f$. We have the following result.

\begin{lemma}\label{lem: bound of H for C_m D_2m}
	If $H\cong C_m,D_{2m}$, then $|H|\leq 2n$. Here we assume $n\geq 3$.
\end{lemma}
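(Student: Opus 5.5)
The plan is to exploit the fact that $H$ permutes the set $\Lambda\subset B=\mathbb P^1$ of critical values of $f$, i.e.\ the images of the $n$ singular fibers. Since $(\sigma,\tau)\in G$ maps singular fibers to singular fibers ($\sigma$ maps $f^{-1}(b)$ isomorphically onto $f^{-1}(\tau(b))$), the set $\Lambda$, with $|\Lambda|=n\geq 3$, is a union of $H$-orbits. The bound will then follow from the list of orbit lengths in the table.

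\emph{Case $H\cong C_m$.} A nontrivial cyclic subgroup of $\mathrm{PGL}_2(\mathbb C)$ has exactly two fixed points, and every other orbit has length $m$. Since $n\geq 3$, $\Lambda$ cannot consist only of fixed points: at most $2$ points of $\Lambda$ are fixed, so $\Lambda$ contains at least one orbit of length $m$. Hence $m\leq n$, and so $|H|=m\leq n\leq 2n$.

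\emph{Case $H\cong D_{2m}$.} The orbits have lengths $2$, $m$, or $2m$. There is exactly one orbit of length $2$ (the two poles), and two orbits of length $m$, which for $m=2$ also have length $2$. If $\Lambda$ contains an orbit of length $m$ or $2m$, then $m\leq n$ and so $|H|=2m\leq 2n$. Otherwise $\Lambda$ is contained in the unique orbit of length $2$ (for $m\geq 3$), which forces $n\leq 2$, a contradiction. For $m\leq 2$ we have $|H|\leq 4<2n$ anyway, and $m=1$ is trivial.

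The only delicate point is the bookkeeping for small $m$, where the orbit types coincide. The main step is the observation that $\Lambda$ is $H$-stable, which needs the compatibility $f\circ\sigma=\tau\circ f$. I expect no real obstacle.
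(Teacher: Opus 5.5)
Your proof is correct and follows the same route as the paper: the singular-fiber locus is an $H$-stable union of orbits, the bound is read off from the orbit lengths of $C_m$ and $D_{2m}$, and $n\ge 3$ excludes the case where the locus consists only of the fixed points or the polar $2$-orbit. The paper instead writes $n$ as an explicit sum of orbit lengths, and it leaves implicit the small-$m$ coincidences of orbit types that you treat separately.
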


\begin{proof}
Let $\Delta$ be the locus of singular fibers on $B$. Then $\Delta$ is preserved by $H$, hence it is a union of $H$-orbits.

If $H=C_m$, it has two fixed points on $B$. Since $f$ has at least $3$ singular fibers, we have 
\[\#\Delta=n=km+r,\quad r=0,1,2, k\geq1.\]
Therefore $|H|=m\leq km+r=n$.

If $H=D_{2m}$, it has one orbit of two points with stabilizer $C_m$ and two orbits of $m$ points with stabilizer $C_2$. Hence
\[\#\Delta =n= k(2m)+2r+sm, r=0,1,s=0,1,2.\]
If $k\geq 1$, then $|H|=2m\leq 2km+2r+sm=n$.

If $k=0$, then $n=2r+sm, s\geq 1$. In this case $|H|=2m\leq 2sm+4r=2n$.
\end{proof}

\subsection{Genus-two fibrations and singularity indices}

Let $f\colon S\to B$ be a relatively minimal genus-two fibration, i.e., the general fiber has genus two. Then there is a ruled surface $\tilde{\varphi}\colon \widetilde{P}\to B$ (not necessarily relatively minimal) and a double cover $\tilde{\theta}\colon \widetilde{S}\to \widetilde{P}$ branched over a smooth curve $\widetilde{R}\in |2\tilde{\mathfrak{d}}|$ with $\mathfrak{d}\in\mathrm{Pic}(\widetilde{P})$ such that $f$ is the relatively minimal model of $\tilde{f}=\tilde{\varphi}\circ\tilde{\theta}$. We have the following commutative diagram
\[\begin{tikzcd}
	S  \ar[drr,"f"'] & \widetilde{S} \ar[l,"\rho"']\ar[r,"\tilde{\theta}"] \ar[dr,"\tilde{f}"] & \widetilde{P} \ar[d,"\tilde{\varphi}"] \\
	& & B
\end{tikzcd}\]
where $\rho$ contracts the $\tilde{f}$-vertical $(-1)$-curves on $\widetilde{S}$.

Let $\varphi\colon P\to B$ be the relatively minimal model of $\tilde{\varphi}$. Then $P\cong \mathbb{P}(f_*\omega_{S/B}\otimes\mathscr{L})$ for some ample invertible sheaf $\mathscr{L}$ on $B$. Let $R=\tilde{\psi}_*\widetilde{R}$. Then there is a divisor $\mathfrak{d}$ on $P$ such that $R\sim 2\mathfrak{d}$. We call the triple $(P,R,\mathfrak{d})$ the genus-two data for $f$. This data uniquely determines $f$.


There is a natural contraction $\tilde{\psi}\colon \widetilde{P}\to P$, which is a composition of blowing-ups, and we can write 
\[\tilde{\psi}\colon P_N\xrightarrow{\psi_N} P_{N-1}\xrightarrow{\psi_{N-1}}\cdots\xrightarrow{\psi_2}P_1\xrightarrow{\psi_1} P_0,\]
where $P_N=\widetilde{P}$, $P_0=P$, and $\psi_i\colon P_i\to P_{i-1}$ is the blow-up at $x_i\in P_{i-1}$. We define a reduced divisor $R_i$ inductively as $R_{i-1}=\psi_{i*}R_i$ starting from $R_N=\widetilde{R}$ down to $R_0=R$.
We set $E_i=\psi_i^{-1}(x_i)$ and $m_i=\mathrm{mult}_{x_i}R_{i-1}$, then $R_i=\psi_i^*R_{i-1}-2[\frac{m_i}{2}]E_i$ and there is a divisor $\mathfrak{d}_i$ on each $P_i$ such that $R_i\sim 2\mathfrak{d}_i$, hence $\mathfrak{d}_i=\psi_i^*\mathfrak{d}_{i-1}-[\frac{m_i}{2}]E_i$.

Let $\theta_i\colon S_i\to P_i$ be the double cover associated to the data $(R_i,\mathfrak{d}_i)$ for each $i=0,\dots,N$. We choose $\tilde{\psi}$ to be the minimal resolution of the singular points of $R$; then $\widetilde{S}\to S_0$ is the minimal resolution of $S_0$. A singular point of $R$ is called negligible if the point itself and all its infinitely near points are double points or triple points with at least two distinct tangents \cite[Section~1]{ChenZhi1994Genus2}. Let $\hat{\psi}\colon \widehat{P}\to P$ be the composition of blowing-up of non-negligible singular points of $R$, and $(\widehat{R},\hat{\mathfrak{d}})$ be the corresponding double covering data on $\widehat{P}$. The principal part $\widehat{R}^0$ is defined to be the divisor $\widehat{R}$ minus all isolated vertical $(-2)$-curves from it.

Let $F,\widetilde{F},\widetilde{\Gamma}$ and $\Gamma$ be general fibers of $f,\tilde{f},\tilde{\varphi}$ and $\varphi$ respectively. For fibers over a specified point $p\in B$, we write $F_p$, $\widetilde F_p$, $\widetilde\Gamma_p$, and $\Gamma_p$. For any fixed $p\in B$, we consider all singular points (including infinitely near ones) of $R$ on $\Gamma_p$. The second and third singularity indices $s_2(F_p)$ and $s_3(F_p)$ are defined as follows:

If $R$ has no quadruple singularities on $\Gamma_p$, then $s_3(F_p)$ equals the number of $(3\to 3)$ type singularities of $R$ on $F$. Otherwise $s_3(F_p)$ equals the number of $(3\to 3)$ type singularities on $F$ plus one. Hence $s_3(F_p)=0$ if and only if $R$ has no non-negligible singularities on $\Gamma_p$.

Let $\hat{\varphi}\colon \widehat{R}^0\to B$ be the natural projection induced by $\hat{\varphi}=\varphi\circ\hat{\psi}\colon \widehat{P}\to B$. Then the second singularity index $s_2(F_p)$ of $F_p$ is the ramification index of the divisor $\widehat{R}^0$ on $p$ with respect to the projection $\hat{\varphi}$. We define the singularity index of the genus-two fibration $f$ to be
\[s_k(f)=\sum_{p\in\Delta}s_k(F_p), k=2,3,\]
where $\Delta$ is the locus of singular fibers of $f$ on $B$. These invariants are related to the relative invariants of $f$ as follows.

\begin{theorem}\label{thm: Xiao's formula}\cite{Xia85}
	Let $f\colon S\to B$ be a relatively minimal genus-two fibration. Then we have
	\[K_{S/B}^2=\frac{1}{5}s_2(f)+\frac{7}{5}s_3(f).\]
\end{theorem}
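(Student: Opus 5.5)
This is Xiao's formula \cite{Xia85}. The plan is to compare two global computations on the genus-two data $(P,R,\mathfrak d)$: the canonical-resolution formula for $K^2$ of a double cover, and Riemann--Hurwitz for the projection $\hat\varphi\colon\widehat R^0\to B$. Both are linear in $R^2$, $K_P\cdot R$ and the singularity contributions. Eliminating the global terms should leave only local contributions, and these should organize into $s_2$ and $s_3$.

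\textbf{Step 1: normalize the data.} Replace $(P,R,\mathfrak d)$ by an elementary transformation if necessary, so that $R$ has no singular point of multiplicity $>\!4$ and every quadruple point is of the controlled type entering the definition of $s_3$; this is the standard normalization in \cite{Xia85}. Since $R\cdot\Gamma=6$, write $R\equiv 6C_0+a\Gamma$ and $\mathfrak d\equiv 3C_0+\tfrac a2\Gamma$ numerically on $P$, where $C_0$ is a section with $C_0^2=-e$. Then $R^2$, $K_P\cdot R$ and $\mathfrak d^2$ are explicit linear expressions in $a,e,b=g(B)$.

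\textbf{Step 2: compute $K^2$.} Use the canonical resolution
\[K_{\widetilde S}^2=2(K_{\widetilde P}+\tilde{\mathfrak d})^2=2(K_P+\mathfrak d)^2-2\sum_i\bigl([\tfrac{m_i}{2}]-1\bigr)^2.\]
Then add back the number of vertical $(-1)$-curves contracted by $\rho$. These arise precisely from $(3\to3)$ singularities and quadruple points. Subtract $8(b-1)(g_B\text{-terms})$ to pass to $K_{S/B}^2=K_S^2-8(g-1)(b-1)$.

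\textbf{Step 3: compute $s_2$.} Apply Riemann--Hurwitz to $\widehat R^0\to B$, which has degree $6$:
\[s_2(f)=2p_a(\widehat R^0)-2-6(2b-2)+(\text{corrections for singularities of }\widehat R^0\text{ resolved later}).\]
Compute $p_a(\widehat R^0)$ by adjunction on $\widehat P$, starting from $R^2+K_P\cdot R$. Then subtract the contributions of the blown-up non-negligible points and the removed isolated $(-2)$-curves.

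\textbf{Step 4: eliminate and check the local terms.} Form $5K_{S/B}^2-s_2(f)$. The global terms in $a,e,b$ should cancel identically. One then checks locally that a negligible singularity (a double point, or a triple point with at least two distinct tangents, possibly infinitely near) contributes $0$. Since such a point is not blown up in $\widehat P$ but is resolved in $\widetilde P$, this amounts to checking that its contributions to the two formulas match. One also checks that each $(3\to3)$ singularity, and each fibre carrying a quadruple point, contributes exactly $7$. This yields $5K_{S/B}^2=s_2+7s_3$.

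\textbf{Main obstacle.} I expect the main difficulty to be the local bookkeeping in Step~4. For a quadruple point, or a $(3\to3)$ point with its infinitely near triple point, one must track simultaneously the multiplicity drops $[\tfrac{m_i}{2}]$, the exceptional curves that become $(-1)$-curves on $\widetilde S$ and are contracted, and the change in ramification of $\widehat R^0$ over $p$, including the vertical $(-2)$-curves removed when forming the principal part. I would first treat the model case of a single $(3\to3)$ point by direct computation in local coordinates. Then I would reduce the quadruple-point case to it by one further blow-up, which accounts for the extra $+1$ in the definition of $s_3$.
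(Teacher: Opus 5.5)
The paper gives no proof of this statement; it is quoted from Xiao. Your route (canonical resolution for $K^2$ against adjunction on $\widehat R^0$, then local bookkeeping) is the standard one, and two parts of it check out:
\begin{itemize}
\item For $R\equiv 6C_0+a\Gamma$ the global terms are $K_{S/B}^2=2a-6e$ and $(K_{P/B}+R)\cdot R=10a-30e$.
\item A $(3\to3)$ point changes $K_{S/B}^2$ by $-2+1$ and $s_2$ by $-12$, giving $5(-1)+12=7$.
\end{itemize}
Here $s_2$ is the local part of $(K_{\widehat P/B}+\widehat R^0)\cdot\widehat R^0$, so Step~3 needs no further corrections. The genuine gap is the quadruple-point case. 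You defer it to an unspecified normalization in Step~1 and an unproved claim in Step~4.

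\textbf{Quadruple points do not always contribute $7$.} Let $x\in\Gamma_p$ be an ordinary quadruple point with non-vertical tangents, and suppose $\Gamma_p\not\subset R$.
\begin{itemize}
\item The canonical resolution changes $K^2_{S/B}$ by $-2$ and creates no $(-1)$-curve. The fibre is a $(-2)$-curve plus an elliptic curve of self-intersection $-2$.
\item Blowing up $x$ in $\widehat P$ changes $(K_{\widehat P/B}+\widehat R^0)\cdot\widehat R^0$ by $12E_1^2=-12$.
\end{itemize}
So this fibre contributes $5(-2)+12=2$ to $5K_{S/B}^2-s_2$, even though the stated definition gives $s_3(F_p)=1$. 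The same happens when $\Gamma_p\subset R$ and $R-\Gamma_p$ has an ordinary triple point at $x$ transverse to $\Gamma_p$. This example also refutes your Step~2 claim that quadruple points produce $(-1)$-curves.

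Both configurations disappear under an elementary transformation (blow up $x$, contract the strict transform of $\Gamma_p$). They become a node, respectively an ordinary triple point. A quadruple point that genuinely carries the $+1$ looks different. For example, take the elementary transform of a $(3\to3)$ point with vertical tangent. The fibre then lies in $R$ and there is an infinitely near quadruple point. There $K^2$ changes by $-4+1$ and $s_2$ by $-22$, giving $7$. A single ``further blow-up'' cannot connect the quadruple case to the $(3\to3)$ case; the comparison also needs the contraction.

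\textbf{What you need to supply.} You need a normalization lemma stating exactly which quadruple points can occur. You must also show that the model on which $s_2,s_3$ are defined already satisfies it, or that the indices do not change under the elementary transformations you perform. You cannot replace $(P,R,\mathfrak d)$ ``if necessary'', because $s_2$ and $s_3$ are computed on the given model.

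Step~4's dichotomy also needs care. It says negligible points give $0$ and every other point is $(3\to3)$ or quadruple. That requires ``negligible'' to mean simple, i.e.\ all $m_i\le 3$ in the canonical resolution. Under the literal wording used here, a single-tangent triple point such as $u^3=t^4$ is non-negligible but neither $(3\to3)$ nor quadruple. Blowing it up leaves $K^2$ unchanged and lowers $s_2$ by $2$, a contribution of $2$.
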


	\subsection{The geometry of \texorpdfstring{$(1,2)$}{(1,2)}-surfaces}
	\label{subsec: geometry}
	
	Let $S$ be a $(1,2)$-surface. The canonical linear system $|K_S|$ is a pencil with a unique base point $b \in S$. Let $\epsilon: \widetilde{S} \to S$ be the blow-up of $S$ at $b$. The induced map $f = \Phi_{K_{\widetilde{S}}}: \widetilde{S} \to \mathbb{P}^1$ is a genus-two fibration.
	
	The double-cover description we use is the standard one for surfaces on the Noether line, a special case of Horikawa's classification of algebraic surfaces of general type with small $c_1^2$ \cite{Horikawa1976SmallI,Horikawa1976SmallII}. More precisely, Horikawa's construction imposes the condition that the branch curve have no infinitely near triple points \cite[Lemma~2.1 and Theorem~2.1, pp.~128--129]{Horikawa1976SmallII}. In the standard terminology of double covers, double or triple branch points with no infinitely near triple point are called non-essential, or simple \cite[p.~481]{CilibertoMendesLopesPardini2014Classification}; they belong to Chen's negligible class, in which the point and all its infinitely near points are double points or triple points with at least two distinct tangents \cite[Section~1]{ChenZhi1994Genus2}. Thus the modern term \emph{negligible} used below records the singularity class relevant to Chen's singularity indices, rather than quoting Horikawa's terminology verbatim.

	Furthermore, the bicanonical map of $\widetilde{S}$ induces a generically double cover $\tau: \widetilde{S} \to \Sigma_2$ onto the Hirzebruch surface $\Sigma_2$. The branch locus $R$ is a disjoint union of $\Delta_0$ and $R_0$ with $R_0\in|5\Delta_0+10\Gamma|$, where $\Delta_0$ is the section with self-intersection $-2$ and $\Gamma$ is the fiber class, and $R$ has only negligible singularities. Then $\widetilde{S}$ is the minimal resolution of the double cover $S'\to \Sigma_2$ associated to the data $(R,3\Delta_0+5\Gamma)$. We have $\tau^*\Delta_0=2E$, where $E$ is the exceptional curve of $\epsilon$, and $\widetilde{S}$ has invariants $p_g(\widetilde{S})=2$ and $K_{\widetilde{S}}^2=0$.
	
	We have the following commutative diagram:
	\[
		\begin{tikzcd}
		& \widetilde{S} \ar[r,"\widetilde{\Phi}"] \ar[d,"\epsilon"] \ar[dl,"f"']& \Sigma_2\ar[d,"\rho"]\\
		\mathbb{P}^1 & S \ar[r, dashed, "\Phi_2"] & \Sigma'_2
	\end{tikzcd}
	\]
	where $\Phi_2$ is the bicanonical map of $S$, and $\Sigma_2'$ is the rational normal cone over a rational curve of degree $2$.
The fibration $f\colon \widetilde{S}\to\mathbb{P}^1$ is induced by the natural projection $\pi\colon \Sigma_2\to\mathbb{P}^1$, and $E$ is a section of $f$. Since $f$ is the canonical map of $\widetilde{S}$ and $F\sim K_{\widetilde{S}}-E$ for a general fiber $F$, adjunction gives $2g(F)-2=(K_{\widetilde{S}}+F)\cdot F=2$.
Since $R$ has only negligible singularities, there are no vertical $(-1)$-curves on $\widetilde{S}$; therefore $f\colon\widetilde{S}\to\mathbb{P}^1$ is a relatively minimal genus-two fibration.

Conversely, this construction can be reversed, so that $(1,2)$-surfaces arise precisely in this way. The linear system $|\Delta_0+2\Gamma|$ is base-point-free by \cite[V, Theorem~2.17]{Har77}, hence so is its fifth multiple $|5\Delta_0+10\Gamma|=|5(\Delta_0+2\Gamma)|$. By Bertini's theorem a general $R_0\in|5\Delta_0+10\Gamma|$ is smooth and irreducible, and since $R_0\cdot\Delta_0=0$ it is disjoint from $\Delta_0$. For any reduced $R=\Delta_0+R_0$ with only negligible singularities, the double cover of $\Sigma_2$ associated to the data $(R,3\Delta_0+5\Gamma)$ has minimal resolution $\widetilde{S}$ with $p_g(\widetilde{S})=2$ and $K_{\widetilde{S}}^2=0$; the inverse image of $\Delta_0$ is a $(-1)$-curve, and contracting it yields a minimal surface $S$ of general type with $K_S^2=1$ and $p_g(S)=2$, that is, a $(1,2)$-surface.

We will study the automorphisms of $S$ via the above geometric construction.

\begin{remark}\label{rem: n geq 3}
	The fibration $f\colon \widetilde{S}\to\mathbb{P}^1$ has at least $3$ singular fibers. First, the residual horizontal branch $R_0\to\mathbb{P}^1$ is not \'{e}tale. Indeed, an \'{e}tale cover of $\mathbb{P}^1$ is trivial, so otherwise $R_0$ would split into five disjoint sections. Each such section is disjoint from $\Delta_0$ and hence has numerical class $\Delta_0+2\Gamma$, but two sections of this class have intersection number $(\Delta_0+2\Gamma)^2=2$, a contradiction. Chen's analysis then identifies $f$ as a genus-two fibration of variable moduli \cite[proof of Proposition~3.4]{ChenZhi1994Genus2}. Beauville proves, without any semistability hypothesis, that a variable-moduli family of curves over $\mathbb{P}^1$ has at least three singular fibers \cite[Proposition~1(1), p.~97]{Bea81}. The assertion follows.
\end{remark}

	\begin{lemma}\label{lem: lift to S}
		Any automorphism $\sigma \in \mathrm{Aut}(S)$ lifts to an automorphism of $\widetilde{S}$ preserving the fibration $f$ (an automorphism of $f$) and the section $E$.
	\end{lemma}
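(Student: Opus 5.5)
The plan is to exploit that the base point $b$ of the pencil $|K_S|$ is canonically attached to $S$. Since $K_S^2=1$ and $p_g(S)=2$, two general members of $|K_S|$ meet in exactly one point, and this is the unique base point $b$. Any $\sigma\in\mathrm{Aut}(S)$ satisfies $\sigma^*K_S\sim K_S$, so $\sigma$ permutes the members of $|K_S|$. Hence $\sigma$ maps the base locus of $|K_S|$ to itself, and therefore $\sigma(b)=b$.

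By the universal property of blowing up, an automorphism fixing $b$ lifts uniquely to an automorphism $\tilde\sigma$ of $\widetilde S=\mathrm{Bl}_b S$ with $\epsilon\circ\tilde\sigma=\sigma\circ\epsilon$. Concretely, $\epsilon\circ\sigma^{-1}\circ\ldots$: one takes $\sigma\circ\epsilon\colon\widetilde S\to S$, observes that the inverse image of $b$ under it is the Cartier divisor $E$, and applies the universal property; the same construction for $\sigma^{-1}$ gives the inverse. The lift satisfies $\tilde\sigma(E)=E$, since $E=\epsilon^{-1}(b)$ and $\sigma(b)=b$.

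It remains to check that $\tilde\sigma$ preserves $f$. Since $\tilde\sigma^*K_{\widetilde S}\sim K_{\widetilde S}$ and $K_{\widetilde S}=\epsilon^*K_S+E$, the automorphism $\tilde\sigma$ acts linearly on $H^0(\widetilde S,K_{\widetilde S})\cong H^0(S,K_S)$. This space is two-dimensional, so the action is given by an element of $\mathrm{GL}_2(\mathbb C)$. The fibration $f=\Phi_{K_{\widetilde S}}$ is the map defined by the moving part $|\epsilon^*K_S-E|$, which is base-point free. Pulling back sections by $\tilde\sigma$ therefore induces a projective linear map $\tau\in\mathrm{PGL}_2(\mathbb C)$ on $\mathbb P^1=\mathbb P(H^0(K_S)^\vee)$ with $f\circ\tilde\sigma=\tau\circ f$. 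One can also see this directly: $\tilde\sigma$ sends the strict transform of a canonical curve $C\ni b$ to the strict transform of $\sigma(C)$, so it maps fibres to fibres. Thus $(\tilde\sigma,\tau)\in\mathrm{Aut}(f)$.

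The only mildly delicate point is justifying that the lift exists as a morphism, rather than only as a birational map, and that it respects the fibration. Both follow from the universal property of blow-ups together with the functoriality of the canonical pencil. Finally, the assignment $\sigma\mapsto(\tilde\sigma,\tau)$ is an injective homomorphism, because $\tilde\sigma$ determines $\sigma$ on the dense open set $S\setminus\{b\}$.
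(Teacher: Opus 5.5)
Your proposal is correct and follows the same route as the paper. The unique base point $b$ of $|K_S|$ is fixed by every automorphism, so $\sigma$ lifts to $\widetilde S=\mathrm{Bl}_b S$ preserving $E=\epsilon^{-1}(b)$, and invariance of the canonical pencil makes the lift compatible with $f$. You supply details the paper leaves implicit (the universal property of the blow-up, and the moving part $|\epsilon^*K_S-E|$ of $|K_{\widetilde S}|$). The only blemish is the stray incomplete sentence beginning ``Concretely, $\epsilon\circ\sigma^{-1}\circ\ldots$'', which should be deleted in favor of the argument with $\sigma\circ\epsilon$ that follows it.
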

	
	\begin{proof}
		The base point $b$ is unique, hence fixed by any $\sigma \in \mathrm{Aut}(S)$. Thus $\sigma$ lifts to $\widetilde{S}$. Since $|K_S|$ is the canonical system, it is invariant under automorphisms, implying $\sigma$ preserves the fibration structure.
	\end{proof}
	
	Consequently, $\mathrm{Aut}(S)$ can be identified with the subgroup of $\mathrm{Aut}(f)$ that stabilizes the section $E$. We consider the natural exact sequence:
	\begin{equation}
		1 \to K_0 \to \mathrm{Aut}(S) \to H_0 \to 1
	\end{equation}
	where $H_0 \subset \mathrm{Aut}(\mathbb{P}^1)$ is the horizontal part acting on the base, and $K_0$ is the vertical part acting fiberwise.

\subsection{Automorphisms of the Hirzebruch surface \texorpdfstring{$\Sigma_2$}{Sigma2}}
\label{subsec: aut sigma2}

We recall the automorphism group of $\Sigma_2$ in the form needed below. The computation follows Cox's homogeneous-coordinate construction for toric varieties \cite{Cox1995HomogeneousCoordinateRing}, in the corrected form of his automorphism-group statement \cite{Cox2014Erratum}: for a complete simplicial toric variety the identity component of the automorphism group is recovered from the graded automorphisms of the Cox ring modulo the characteristic quasitorus. For background on toric varieties and homogeneous coordinates we refer to \cite{CoxLittleSchenck2011ToricVarieties}, and to \cite{ArzhantsevDerenthalHausenLaface2015CoxRings} for Cox rings in general. View $\Sigma_2$ as the toric surface with fan generated by
\[
n_1=(1,0),\quad n_2=(0,1),\quad n_3=(-1,2),\quad n_4=(0,-1).
\]
Its Cox ring is
\[
S=\mathbb{C}[x_1,x_2,x_3,x_4],
\]
graded by $\mathrm{Cl}(\Sigma_2)\cong\mathbb{Z}^2$ with
\[
\deg x_1=(1,0),\quad \deg x_2=(0,1),\quad
\deg x_3=(1,0),\quad \deg x_4=(2,1).
\]
Here $x_1,x_3$ are homogeneous coordinates on the base $\mathbb{P}^1$, while $x_2,x_4$ are fiber coordinates. The divisor $\Delta_0$ is $D_2=\{x_2=0\}$, and the fiber class is $\Gamma=D_1\sim D_3$.

The degree pieces relevant to graded automorphisms are
\[
S_{(1,0)}=\langle x_1,x_3\rangle,\qquad
S_{(0,1)}=\langle x_2\rangle,\qquad
S_{(2,1)}=\langle x_4,x_1^2x_2,x_1x_3x_2,x_3^2x_2\rangle .
\]
Equivalently, the Demazure roots split into the semi-simple roots
\[
R_s=\{(1,0),(-1,0)\},
\]
those whose negatives are again roots, and the unipotent roots
\[
R_u=\{(0,-1),(-1,-1),(-2,-1)\},
\]
those whose negatives are not. The semi-simple roots account for the reductive part of $\mathrm{Aut}(\Sigma_2)$ (the factor $\mathrm{PGL}_2(\mathbb{C})\times\mathbb{C}^*$ below), while the unipotent roots account for its unipotent radical; together with the characteristic quasitorus they determine the structure of $\mathrm{Aut}(\Sigma_2)$ given below. Thus $\dim \mathrm{Aut}_g(S)=9$. After quotienting by the characteristic quasitorus
\[
G=\mathrm{Hom}(\mathrm{Cl}(\Sigma_2),\mathbb{C}^*)\cong(\mathbb{C}^*)^2,
\]
one obtains
\[
\dim \mathrm{Aut}(\Sigma_2)=7.
\]
More explicitly, by \cite[\S4]{Cox1995HomogeneousCoordinateRing},
\[
\mathrm{Aut}(\Sigma_2)\cong
H^0(\mathbb{P}^1,\mathcal{O}_{\mathbb{P}^1}(2))\rtimes\bigl(\mathrm{PGL}_2(\mathbb{C})\times \mathbb{C}^*\bigr)
\cong
\mathbb{C}^3\rtimes\bigl(\mathrm{PGL}_2(\mathbb{C})\times \mathbb{C}^*\bigr).
\]
The $\mathrm{PGL}_2(\mathbb{C})$ factor acts on the base pair $(x_1,x_3)$ and the $\mathbb{C}^*$ factor scales the fiber coordinate $x_4$. The unipotent radical acts by
\[
x_4\longmapsto x_4+q(x_1,x_3)x_2,\qquad q\in H^0(\mathbb{P}^1,\mathcal{O}(2)),
\]
and is trivial on the base.

Consequently the natural projection $\pi:\Sigma_2\to\mathbb{P}^1$ induces a homomorphism
\[
\Phi:\mathrm{Aut}(\Sigma_2)\longrightarrow \mathrm{PGL}_2(\mathbb{C}).
\]
For the full Hirzebruch surface this map is surjective, with kernel
the subgroup acting trivially on the base. Thus
\begin{equation}\label{eq: aut sigma2 exact}
1\longrightarrow K\longrightarrow \mathrm{Aut}(\Sigma_2)
\xrightarrow{\Phi} \mathrm{PGL}_2(\mathbb{C})\longrightarrow 1,
\end{equation}
where
\[
K\cong \mathbb{C}^3\rtimes\mathbb{C}^*.
\]
If $\overline{G}=\mathrm{Aut}(\Sigma_2,R)$ for a branch divisor
$R=\Delta_0+R_0$, the image $\Phi(\overline{G})\subset \mathrm{PGL}_2(\mathbb{C})$ may be a proper subgroup. We write $\overline{K}_0=\overline{G}\cap K$ for the subgroup preserving $R$ and acting trivially on the base.

To bound the horizontal part $H_0$, we will first normalize a finite group of automorphisms by conjugating it into the reductive factor $\mathrm{PGL}_2(\mathbb{C})\times\mathbb{C}^*$, removing its unipotent part.

\begin{lemma}\label{lem: conjugate off unipotent}
	Let $V=H^0(\mathbb{P}^1,\mathcal{O}_{\mathbb{P}^1}(2))$ be the unipotent radical of $\mathrm{Aut}(\Sigma_2)\cong V\rtimes(\mathrm{PGL}_2(\mathbb{C})\times\mathbb{C}^*)$, acting by $x_4\mapsto x_4+q(x_1,x_3)x_2$. Then any finite subgroup $G\subset\mathrm{Aut}(\Sigma_2)$ can be conjugated by an element $v\in V$ into $\mathrm{PGL}_2(\mathbb{C})\times\mathbb{C}^*$, so that, on suitable Cox-coordinate representatives, every $g\in G$ acts by
	\[
	[x_1:x_3]\mapsto [ax_1+bx_3:cx_1+dx_3],\qquad x_4\mapsto\lambda(g)\,x_4,
	\]
	with no unipotent term.
\end{lemma}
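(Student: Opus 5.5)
The plan is the standard Levi-decomposition argument: a finite group has vanishing first cohomology in a uniquely divisible abelian group, so every complement of $V$ containing the image of $G$ is $V$-conjugate to the standard one. Write $L=\mathrm{PGL}_2(\mathbb{C})\times\mathbb{C}^*$ and let $p\colon V\rtimes L\to L$ be the projection with kernel $V$. Since $V\cong\mathbb{C}^3$ is a vector group, $V\cap G$ is a finite subgroup of a torsion-free group, hence trivial. So $p|_G$ is injective, and $G$ is the graph of a map over the finite group $\overline G=p(G)\subset L$. Concretely, each $h\in\overline G$ has a unique lift $s(h)=(c(h),h)\in G$ with $c(h)\in V$. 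The group law $(v,h)(w,k)=(v+h\cdot w,\,hk)$, where $L$ acts on $V$ by conjugation, gives the cocycle identity $c(hk)=c(h)+h\cdot c(k)$. So $c\in Z^1(\overline G,V)$.

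Next I use that $V$ is a finite-dimensional complex representation of $\overline G$; the action is linear because conjugation by $L$ preserves the structure $x_4\mapsto x_4+q\,x_2$ and acts on $q$ through the $\mathrm{PGL}_2$-action on binary quadrics twisted by a character. Multiplication by $|\overline G|$ is then invertible on $V$, so $H^1(\overline G,V)=0$. Explicitly, set
\[
v=-\frac{1}{|\overline G|}\sum_{k\in\overline G}c(k).
\]
Summing the cocycle identity over $k$ yields $c(h)=v-h\cdot v$. Then $(v,1)^{-1}(c(h),h)(v,1)=(-v+c(h)+h\cdot v,\,h)=(0,h)$, so conjugating by $v$ sends $G$ into $\{0\}\rtimes L=L$.

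Finally I translate back to Cox coordinates. An element of $L$ is represented by a graded automorphism acting linearly on $(x_1,x_3)$ and fixing $x_2$, possibly after rescaling by the quasitorus. It acts on $x_4$ by a scalar $\lambda(g)$ with no $q\,x_2$ term. This is exactly the displayed normal form, up to the choice of representative modulo the characteristic quasitorus.

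The main obstacle is bookkeeping rather than substance: I must check that the semidirect product description from \cite{Cox1995HomogeneousCoordinateRing} really makes $V$ a normal abelian subgroup with linear $L$-action. One concrete point is how $(A,\mu)\in L$ acts on $q$, namely $q\mapsto\mu\,\det$-twisted $q\circ A^{-1}$, which requires fixing representatives in $\mathrm{GL}_2$. Whatever the twist, the action is linear, and that is all the averaging step needs.
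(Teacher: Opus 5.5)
Your strategy matches the paper's proof. You write the elements as $(c(h),h)$, derive $c(hk)=c(h)+h\cdot c(k)$, average to exhibit the cocycle as a coboundary, and conjugate by the resulting element of $V$. Your preliminary step $V\cap G=1$, which lets you index by $\overline G=p(G)$, is correct but not needed: the paper averages over $G$ itself, and $u_{gg'}=u_g+\rho(g)u_{g'}$ holds without injectivity.

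There is a sign error in the averaging step as you wrote it. Put $\Sigma=\sum_k c(k)$ and $w=\Sigma/|\overline G|$. Summing the cocycle identity over $k$ gives $\Sigma=|\overline G|\,c(h)+h\cdot\Sigma$, so $c(h)=w-h\cdot w$.

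With your choice $v=-w$, this becomes $c(h)=h\cdot v-v$, not $v-h\cdot v$. Your conjugation then gives $(v,1)^{-1}(c(h),h)(v,1)=(-v+c(h)+h\cdot v,\,h)=(2c(h),h)$, which does not remove the cocycle.

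To fix it, drop the minus sign and take $v=w$; this is the paper's $w$, and your formula then reproduces the paper's conjugation $(-w,1)(u_g,\rho(g))(w,1)$. Alternatively, keep your $v$ and conjugate as $(v,1)(c(h),h)(v,1)^{-1}$. With either correction the argument goes through.
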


\begin{proof}
	Write $L=\mathrm{PGL}_2(\mathbb{C})\times\mathbb{C}^*$ and let $\rho\colon G\to L$ be the projection, so each $g\in G$ has the form $(u_g,\rho(g))$ with $u_g\in V$. The multiplication rule in $V\rtimes L$ gives the cocycle identity $u_{gg'}=u_g+\rho(g)\,u_{g'}$, where $L$ acts on the complex vector space $V$ through $\rho$. Set $w=\tfrac{1}{|G|}\sum_{g\in G}u_g\in V$. Averaging the cocycle identity over $g'\in G$ yields $\rho(g)\,w=w-u_g$, i.e. $u_g=w-\rho(g)\,w$. Conjugating $G$ by the element $-w\in V$ then replaces each $u_g$ by $u_g-\bigl(w-\rho(g)\,w\bigr)=0$. Hence the conjugated subgroup lies in $L$.
\end{proof}

\begin{remark}\label{rem: conjugate branch pair}
	When $G=\mathrm{Aut}(\Sigma_2,R)$ preserves a branch divisor $R$, the conjugate $vGv^{-1}$ of Lemma~\ref{lem: conjugate off unipotent} preserves the transformed divisor $v(R)$ rather than $R$ itself. Replacing the pair $(\Sigma_2,R)$ by the isomorphic pair $(\Sigma_2,v(R))$, we may thus assume $G\subset\mathrm{PGL}_2(\mathbb{C})\times\mathbb{C}^*$ throughout, which leaves the bound on $\mathrm{Aut}(S)$ unchanged.
\end{remark}

\subsection{Equation of the branch divisor}
\label{subsec: branch equation}

With the above Cox coordinates on $\Sigma_2$, a monomial
\[
x_1^a x_2^b x_3^c x_4^d
\]
has bidegree $(a+c+2d,b+d)$. Hence $R_0\in |5\Delta_0+10\Gamma|$ is given by a homogeneous polynomial of bidegree $(10,5)$:
\[
f_0=\sum_{d=0}^{5} x_2^{5-d}x_4^d P_{10-2d}(x_1,x_3),
\]
where $P_j$ is a binary form of degree $j$. Thus the full branch divisor has equation
\begin{equation}\label{eq: branch general}
R:\quad x_2 f_0=0.
\end{equation}
The vertical scaling $x_4\mapsto \lambda x_4$ preserves a displayed branch equation precisely when all surviving $x_4$-powers acquire the same scalar. This normal form underlies the case analysis in the proof of Theorem~\ref{thm: main}.
	
	\section{Bounding the Vertical Automorphisms}
	
	In this section, we bound the order of the vertical part $K_0$. We regard an element $\alpha \in K_0$ as an element of $\mathrm{Aut}_{\mathbb{P}^1}(\widetilde{S})$ which preserves $E$. We can see that $\alpha$ restricts to an automorphism of the general fiber $F$ (a genus 2 curve) fixing the intersection point $F \cap E$. Let $\tilde{\sigma}$ be the double cover involution of $\widetilde{S}\to\Sigma_2$, since $E$ is fixed by $\tilde{\sigma}$, $\tilde{\sigma}$ descends to an automorphism of $S$, and $\tilde{\sigma}\in K_0$.

	\begin{lemma}\label{lem: K0 deck}
		Every $\alpha \in K_0$ induces a vertical automorphism of $\Sigma_2$. Writing $\overline{K}_0:=\operatorname{im}\bigl(K_0\to\mathrm{Aut}(\Sigma_2)\bigr)$ for the image, there is an exact sequence
		\[
		1\longrightarrow\langle\tilde{\sigma}\rangle\longrightarrow K_0\longrightarrow\overline{K}_0\longrightarrow 1,
		\]
		where $\tilde{\sigma}$ is the double-cover involution; in particular $|K_0| = 2|\overline{K}_0|$.
	\end{lemma}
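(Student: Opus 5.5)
We show that every element of $K_0$ commutes with the deck involution $\tilde{\sigma}$, and then push it down to $\Sigma_2$. We work with the relatively minimal genus-two fibration $f\colon\widetilde S\to\mathbb P^1$.

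Let $\alpha\in K_0$ and let $F$ be a general fiber. Then $\alpha|_F$ is an automorphism of the genus-two curve $F$. The restriction $\tilde\sigma|_F$ is the hyperelliptic involution of $F$, because the double cover $\widetilde S\to\Sigma_2$ restricts on $F$ to a degree-two map $F\to\Gamma\cong\mathbb P^1$. The hyperelliptic involution is central in $\mathrm{Aut}(F)$, since it is the unique involution with rational quotient. Hence $\alpha\tilde\sigma\alpha^{-1}$ and $\tilde\sigma$ agree on a dense open set, and therefore everywhere. A more canonical way to phrase this: the map $\widetilde S\to\Sigma_2$ is the relative canonical map of $f$ (equivalently, it is given by the bicanonical map), so it is intrinsic and $\alpha$-equivariant.

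Since $\alpha$ commutes with $\tilde\sigma$, it descends to a birational self-map $\bar\alpha$ of $\widetilde S/\langle\tilde\sigma\rangle$. This quotient is birational to $\Sigma_2$, and $\bar\alpha$ commutes with the projection $\pi$ and acts trivially on the base. We must check that $\bar\alpha$ is biregular on $\Sigma_2$. The cleanest route is intrinsic: $\Sigma_2$ is the relatively minimal model $\mathbb P(f_*\omega_{\widetilde S/\mathbb P^1})$ in the genus-two data of \S2.2, up to a twist, and $\alpha$ acts on $f_*\omega$ by functoriality. Thus $\bar\alpha$ is an automorphism of this projective bundle over $\mathrm{id}_{\mathbb P^1}$, and it preserves the branch divisor $R$. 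Alternatively, we can argue that $\alpha$ preserves the bicanonical system and descends through $\Phi_2$ to the cone $\Sigma_2'$, and then lifts to its minimal resolution $\Sigma_2$. Either way we obtain a homomorphism $K_0\to\mathrm{Aut}(\Sigma_2)$ whose image lies in the kernel $K$ of $\Phi$ in \eqref{eq: aut sigma2 exact}. By definition its image is $\overline K_0$.

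It remains to compute the kernel. If $\bar\alpha=\mathrm{id}$, then $\alpha$ is an automorphism of the double cover over the identity, that is, a deck transformation. The generic double cover $\widetilde S\to\Sigma_2$ has deck group $\{1,\tilde\sigma\}$, so $\alpha\in\langle\tilde\sigma\rangle$. Conversely $\tilde\sigma\in K_0$, as noted before the lemma. This gives the exact sequence, and $|K_0|=2|\overline K_0|$ follows.

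The main obstacle is the biregularity of the descended map on $\Sigma_2$, as opposed to mere birationality. I would settle it via the canonical (relative canonical or bicanonical) nature of $\widetilde S\to\Sigma_2$, which makes the construction functorial in $\alpha$.
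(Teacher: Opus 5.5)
Your argument is correct, and it shares two steps with the paper's proof. The first is the commutation $\alpha\tilde\sigma=\tilde\sigma\alpha$, obtained from centrality of the hyperelliptic involution on a general fiber. The second is the identification of the kernel with the deck group $\{1,\tilde\sigma\}$. The difference is in how you obtain a \emph{biregular} automorphism of $\Sigma_2$.

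The paper passes through the normal double cover $S'\to\Sigma_2$, which has canonical singularities. It asserts that $\alpha$ permutes the $(-2)$-curves contracted by $\widetilde S\to S'$, so $\alpha$ descends to $S'$. Commutation with $\tilde\sigma$ then lets $\alpha$ descend further to $S'/\langle\tilde\sigma\rangle=\Sigma_2$.

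You instead make $\tau\colon\widetilde S\to\Sigma_2$ intrinsic. You take it either as the relative canonical map to $\mathbb P(f_*\omega_{\widetilde S/\mathbb P^1})$, or as the lift of the bicanonical map through the minimal resolution $\Sigma_2\to\Sigma_2'$ of the cone. Either way you get $\tau\circ\alpha=\bar\alpha\circ\tau$ for a unique $\bar\alpha\in\mathrm{Aut}(\Sigma_2)$.

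This buys you functoriality. The homomorphism property $K_0\to\mathrm{Aut}(\Sigma_2)$ is immediate. Your commutation step even becomes redundant: equivariance of $\tau$ already makes $\alpha\tilde\sigma\alpha^{-1}$ a nontrivial deck transformation over $\Sigma_2$, hence equal to $\tilde\sigma$. It also avoids the point the paper leaves unargued, namely why $\alpha$ preserves exactly the set of curves contracted by $\widetilde S\to S'$.

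The cost is that you assert, rather than check, that $\tau$ is the relative canonical map. The bicanonical variant is the one directly supported by the diagram in \S\ref{subsec: geometry}, so that is the version to write out. Since $\alpha\in\mathrm{Aut}(S)$, it acts linearly on $H^0(S,2K_S)$ and preserves the cone $\Sigma_2'$. That automorphism lifts uniquely to the minimal resolution $\Sigma_2$. Because $\Sigma_2\to\Sigma_2'$ is birational, $\tau$ is the unique lift of $\Phi_2\circ\epsilon$, which gives the equivariance you need.
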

	
	\begin{proof}
		Let $\theta'\colon S'\to \Sigma_2$ be the double cover associated with the branch locus data $(R, 3\Delta_0+5\Gamma)$. Since $S'$ has only canonical singularities, the minimal resolution $\epsilon'\colon \widetilde{S}\to S'$ contracts only vertical $(-2)$-curves. Any vertical automorphism $\alpha \in K_0$ permutes these $(-2)$-curves; thus, it descends to an automorphism of $S'$, which we still denote by $\alpha$.
		
		To show that $\alpha$ descends further to $\Sigma_2$, it suffices to prove that $\alpha$ commutes with the covering involution $\tilde{\sigma}$ of $\theta'$ (i.e., $\tilde{\sigma}\circ\alpha=\alpha\circ\tilde{\sigma}$). The restriction of $\tilde{\sigma}$ to a general fiber $F$ is the hyperelliptic involution, which is central in $\mathrm{Aut}(F)$; since $\alpha$ preserves $F$ and restricts to an automorphism of it, $\alpha|_F$ commutes with $\tilde{\sigma}|_F$. Hence $\tilde{\sigma}\circ\alpha$ and $\alpha\circ\tilde{\sigma}$ agree on the dense open union of general fibers, and being regular morphisms of $\widetilde{S}$ they coincide everywhere. Therefore $\alpha$ commutes with $\tilde{\sigma}$ and descends to the quotient $\Sigma_2$, where it gives a vertical automorphism preserving the branch divisor $R$. The kernel of the resulting homomorphism $K_0\to\overline{K}_0$ is generated by the covering involution $\tilde{\sigma}\in K_0$, which acts trivially on $\Sigma_2$; hence $|K_0|=2|\overline{K}_0|$.
	\end{proof}

	\begin{proposition}\label{prop: K0 bar cyclic}
		The group $\overline{K}_0$ is a cyclic group of order $k$, where $k \in \{1, 2, 4, 5\}$.
	\end{proposition}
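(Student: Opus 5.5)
The plan is to show first that $\overline{K}_0$ is cyclic, acting on each fiber of $\pi\colon\Sigma_2\to\mathbb{P}^1$ by a scaling $x_4\mapsto\lambda x_4$. Then I count orbits of this scaling on the six branch points of a general fiber.

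\emph{Step 1 (cyclicity).} $\overline{K}_0$ is a finite subgroup of $K=\ker\Phi\cong V\rtimes\mathbb{C}^*$. By Lemma~\ref{lem: conjugate off unipotent} and Remark~\ref{rem: conjugate branch pair}, we may conjugate by some $v\in V$ so that $\overline{K}_0$ lies in $L=\mathrm{PGL}_2(\mathbb{C})\times\mathbb{C}^*$, at the cost of replacing $R$ by $v(R)$. Since $V\subset K$ and $K$ is normal, the conjugate still lies in $K$. Hence it lies in $K\cap L=\{1\}\times\mathbb{C}^*$, the scalings $x_4\mapsto\lambda x_4$. A finite subgroup of $\mathbb{C}^*$ is cyclic, say $\mu_k$ generated by a primitive $k$-th root of unity $\lambda$, so $\overline{K}_0\cong C_k$. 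The homomorphism to $\mathbb{C}^*$ is injective, because $x_4\mapsto\lambda x_4$ with $\lambda\neq1$ is nontrivial on every fiber.

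\emph{Step 2 (fiber picture).} Fix a general fiber $\Gamma_p\cong\mathbb{P}^1$ with affine coordinate $t=x_4/x_2$. The generator acts by $t\mapsto\lambda t$, with fixed points $t=0$ and $t=\infty$; the latter is $\Gamma_p\cap\Delta_0$. The branch points of $R$ on $\Gamma_p$ are $t=\infty$ together with the roots of
\[
f_0|_{\Gamma_p}=\sum_{d=0}^5 P_{10-2d}(p)\,t^d .
\]
The leading coefficient $P_0$ is a nonzero constant; otherwise $x_2\mid f_0$ and $\Delta_0\subset R_0$, contradicting that $R$ is reduced and $R_0\cap\Delta_0=\emptyset$. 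So there are exactly five finite roots. For general $p$ they are distinct, since $R_0$ is reduced and hence $R_0\to\mathbb{P}^1$ is generically unramified.

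\emph{Step 3 (orbit count).} $\overline{K}_0$ preserves $R_0$ and acts trivially on the base, so $\mu_k$ permutes these five distinct roots. Every nonzero root has an orbit of exact size $k$, and $0$ is fixed. Thus $5=ak+\varepsilon$ with $\varepsilon\in\{0,1\}$, where $\varepsilon=1$ exactly when $P_{10}(p)=0$. If $\varepsilon=1$ for general $p$ then $P_{10}\equiv0$.
\begin{itemize}
\item If $\varepsilon=0$, then $k\mid 5$.
\item If $\varepsilon=1$, then $k\mid 4$.
\end{itemize}
Hence $k\in\{1,2,4,5\}$. As an algebraic cross-check, invariance of $f_0$ up to scalar forces $\lambda^d=\lambda^5$ for every $d$ with $P_{10-2d}\neq0$, which gives the same constraint.

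The step I expect to need the most care is Step 1. I must check that the conjugation of Lemma~\ref{lem: conjugate off unipotent} preserves the property of acting trivially on the base, which holds because $V\subset K$. I must also check that $K\cap L$ is exactly the $\mathbb{C}^*$ factor, which holds because $\Phi$ restricted to $L$ is the projection to $\mathrm{PGL}_2(\mathbb{C})$. The remaining steps are elementary once the nonvanishing of $P_0$ and the distinctness of the roots on a general fiber are established.
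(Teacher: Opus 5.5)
Your proof is correct. The orbit count in Steps~2--3 is the same as the paper's, but you obtain cyclicity by a different, global argument.

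The paper works fiberwise. $\overline{K}_0$ acts on a general fiber $\Gamma_p\cong\mathbb{P}^1$ fixing $\Delta_0\cap\Gamma_p$, and a finite subgroup of $\mathrm{PGL}_2(\mathbb{C})$ with a fixed point is cyclic. You instead use $\ker\Phi\cong V\rtimes\mathbb{C}^*$ and Lemma~\ref{lem: conjugate off unipotent} to make $\overline{K}_0$ a group of scalings $x_4\mapsto\lambda x_4$.

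The paper's route is shorter and does not depend on the Cox-ring description of $\mathrm{Aut}(\Sigma_2)$. However, it tacitly assumes that $\overline{K}_0$ acts faithfully on a general fiber, and it asserts without justification that $R_0\cap\Gamma_p$ consists of five distinct points. Your route makes both explicit: $P_0\neq0$, generic unramifiedness of $R_0\to\mathbb{P}^1$, and faithfulness of the scalings on every fiber. It also yields a global normal form. The second fixed point is the section $\{x_4=0\}$ on every fiber, and $k\mid 5$ versus $k\mid 4$ is decided by whether $P_{10}\equiv0$. This is exactly the normalization used later in Case~3 of Theorem~\ref{thm: main}.

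Two small remarks:
\begin{enumerate}
\item Cyclicity itself does not need the conjugation. $V$ is torsion-free, so any finite subgroup of $V\rtimes\mathbb{C}^*$ meets $V$ trivially and embeds in $\mathbb{C}^*$. The conjugation only serves to make the action a pure scaling with fixed points $t=0,\infty$ on every fiber.
\item Your algebraic cross-check $\lambda^{5-d}=1$ (for $P_{10-2d}\neq0$) reproduces $k\in\{1,2,4,5\}$ only after invoking reducedness. Reducedness forces $P_{10}\neq0$ or $P_8\neq0$, since otherwise $x_4^2\mid f_0$. Without it, a lone $P_6$ term alongside $P_0$ would allow $\lambda^3=1$.
\end{enumerate}
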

	
	\begin{proof}
		Let $p\in\mathbb{P}^1$ be a general point. Since $\overline{K}_0$ acts trivially on the base, it acts on the fiber $\Gamma_p\cong\mathbb{P}^1$, preserving the branch set $R\cap\Gamma_p$ (which generically consists of $6$ distinct points) and fixing the intersection point $\Delta_0\cap\Gamma_p$ (corresponding to $E$). A finite subgroup of $\mathrm{PGL}_2(\mathbb{C})$ with a fixed point is cyclic, so $\overline{K}_0\cong C_m$ for some $m\ge 1$.

		A non-trivial element of $\mathrm{PGL}_2(\mathbb{C})$ has exactly two fixed points, one of which is $\Delta_0\cap\Gamma_p$. The remaining $R_0\cdot\Gamma = 5$ branch points of $R_0\cap\Gamma_p$ therefore decompose into $C_m$-orbits of length $m$ together with at most one further fixed point (the second fixed point of the action). Hence the number of non-fixed points among them---which is $5$ if the second fixed point is not in $R_0$, and $4$ otherwise---must be divisible by $m$.

		Hence $m$ divides $5$ (if the second fixed point does not lie in $R_0$) or $4$ (if it does), so $\overline{K}_0\cong C_m$ with $m\in\{1,2,4,5\}$.
	\end{proof}

	\begin{corollary}
		$|K_0| \leq 10$.
	\end{corollary}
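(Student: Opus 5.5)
This follows directly from Lemma~\ref{lem: K0 deck} and Proposition~\ref{prop: K0 bar cyclic}. Lemma~\ref{lem: K0 deck} gives the exact sequence
\[
1\to\langle\tilde\sigma\rangle\to K_0\to\overline{K}_0\to 1,
\]
whose kernel is generated by the covering involution. Hence $|K_0|=2|\overline{K}_0|$.

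Proposition~\ref{prop: K0 bar cyclic} shows that $\overline{K}_0\cong C_k$ with $k\in\{1,2,4,5\}$. Therefore $|\overline{K}_0|\le 5$, and so $|K_0|=2|\overline{K}_0|\le 10$.

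No step is a genuine obstacle, since both ingredients are already established above. The only point to check is that $\tilde\sigma$ really lies in $K_0$, so that the factor $2$ is exact and not merely an upper bound. This holds because $\tilde\sigma$ fixes $E$ pointwise and acts trivially on the base, so it descends to $S$ as a vertical automorphism; this was noted before Lemma~\ref{lem: K0 deck}.

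A finer reading is $|K_0|\in\{2,4,8,10\}$, according to the four cases of $\overline{K}_0$. This refined form is the one to carry into the case analysis of Theorem~\ref{thm: main}.
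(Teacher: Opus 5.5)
Your proposal is correct and is the paper's argument: combine $|K_0|=2|\overline{K}_0|$ from Lemma~\ref{lem: K0 deck} with $|\overline{K}_0|\in\{1,2,4,5\}$ from Proposition~\ref{prop: K0 bar cyclic}. Your side remark that $\tilde\sigma\in K_0$ makes the factor $2$ exact is accurate, though the upper bound itself only needs the kernel of $K_0\to\overline{K}_0$ to have order at most $2$.
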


	\begin{proof}
		By Lemma~\ref{lem: K0 deck} and Proposition~\ref{prop: K0 bar cyclic}, $|K_0|=2|\overline{K}_0|=2k$ with $k\in\{1,2,4,5\}$, so $|K_0|\le 10$.
	\end{proof}
	
\section{Proof of the Main Theorem}

Throughout this section $\overline{K}_0$ denotes the image of the vertical kernel $K_0$ in $\mathrm{Aut}(\Sigma_2)$; by Lemma~\ref{lem: K0 deck} one has $|K_0|=2|\overline{K}_0|$.

The bound rests on local lower bounds for the singularity index $s_2$ at the singular fibers, which we take from Chen's local classification \cite{ChenZhi1994Genus2}, translated into the present notation. Fix $p\in\mathbb{P}^1$ over which the fiber is singular and restrict $\pi\colon\Sigma_2\to\mathbb{P}^1$ to a small disk $\Delta\ni p$. This is exactly Chen's local model: a ruled surface $P_\Delta=\pi^{-1}(\Delta)$ over $\Delta$ with central fiber $F_0=\Gamma_p$, branch curve $R_\Delta=R|_{P_\Delta}$ whose horizontal part is $R'_\Delta=R_0|_{P_\Delta}$, a fiber coordinate $x$ vanishing on the section $\Delta_0$, and a base parameter $t$ centered at $p$. Chen's local vertical group $\overline{K}_\Delta\subseteq\mathrm{Aut}(P_\Delta)$ contains the global image $\overline{K}_0$, and since $\Delta_0\cdot\Gamma=1$ and $R_0\cdot\Gamma=5$, a general fiber meets $R_\Delta$ in the six points $(\Delta_0\cap\Gamma_p)\cup(R_0\cap\Gamma_p)$. When $F_p$ is singular but $F_0\not\subseteq R_\Delta$ (i.e.\ outside case (i) below), the horizontal branch $R'_\Delta$ is necessarily non-\'etale over $\Delta$: were it \'etale, $R_\Delta$ would meet every fiber transversally in six distinct points and the double cover would have smooth genus-two fibers over all of $\Delta$, contradicting the singularity of $F_p$. This is the hypothesis of parts (ii) and (iii).

\begin{proposition}\label{prop: chen local}
	Assume $R$ has only negligible singularities, so that $s_3(F_p)=0$ for all $p$. With the notation above, the second singularity index of a singular fiber satisfies the following.
	\begin{enumerate}[label=\textup{(\roman*)}]
		\item If $\overline{K}_\Delta\cong O_{24}$, $T_{12}$ or $D_{12}$, then $F_0\subseteq R_\Delta$, $R_\Delta$ has six ordinary double points on $F_0$, and $s_2(F_p)=10$ \textup{(\cite[Lemma~2.1]{ChenZhi1994Genus2})}.
		\item If $\overline{K}_\Delta\cong C_5$ and $R'_\Delta$ is not \'etale over $\Delta$, then, up to a coordinate change, $R'_\Delta$ has equation $x(x^5-t^k)$, with $s_2(F_p)\ge 6$, or $x(t^kx^5-1)$, with $s_2(F_p)\ge 4$; here $k\in\{1,2\}$ \textup{(\cite[Lemma~2.4]{ChenZhi1994Genus2})}. In the second form the exact value is $s_2(F_p)=4k$, computed below.
		\item If $\overline{K}_\Delta\cong C_4$ and $R'_\Delta$ is not \'etale over $\Delta$, then, up to a coordinate change, $R'_\Delta$ has equation $x(x^4-t^k)$ with $k\in\{1,2\}$, and $s_2(F_p)\ge 5$ \textup{(\cite[Lemma~2.6]{ChenZhi1994Genus2})}.
		\item In every case $s_2(F_p)\ge 1$ \textup{(\cite[Lemma~2.8]{ChenZhi1994Genus2})}.
	\end{enumerate}
	In particular, if $\overline{K}_0\cong C_5$ then $\overline{K}_\Delta\cong C_5$ at every singular fiber---the only group in Chen's list admitting a subgroup of order $5$---so $s_2(F_p)\ge 4$; and if $\overline{K}_0\cong C_4$ then $\overline{K}_\Delta\in\{C_4,O_{24}\}$---the only groups in Chen's list admitting a cyclic subgroup of order $4$---so $s_2(F_p)\ge 5$.
\end{proposition}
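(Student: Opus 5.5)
The plan is to reduce everything to Chen's local classification and then check, in local coordinates, the two assertions the paper adds: the exact value $s_2(F_p)=4k$ in the second form of (ii), and the two ``in particular'' statements. First I set up Chen's local model exactly as described before the statement. Take $P_\Delta=\pi^{-1}(\Delta)\cong\Delta\times\mathbb{P}^1$ with fibre coordinate $x$ vanishing on $\Delta_0$. Since $R$ has only negligible singularities, $s_3(F_p)=0$ by the definition of $s_3$, so only $s_2$ enters. The global group $\overline{K}_0$ restricts to vertical automorphisms of $P_\Delta$ preserving $R_\Delta$ and $\Delta_0$, so $\overline{K}_0\subseteq\overline{K}_\Delta$. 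Items (i), (iii), (iv) and the normal forms and lower bounds in (ii) are then quoted directly from \cite[Lemmas~2.1, 2.4, 2.6, 2.8]{ChenZhi1994Genus2}. The one point to check is that Chen's hypotheses match ours: the branch data is $(R,3\Delta_0+5\Gamma)$ with six branch points on a general fibre, and $R'_\Delta$ is non-\'etale outside case (i), as argued in the paragraph preceding the statement.

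Next I compute $s_2$ for $R'_\Delta: x(t^kx^5-1)=0$, $k\in\{1,2\}$. Here the factor $x$ is the section $\Delta_0$, written in the coordinate where $\Delta_0=\{x=0\}$. Set $y=1/x$ near $x=\infty$. The curve $t^kx^5=1$ becomes $y^5=t^k$, so five branch points collide at $y=0$ over $t=0$.
\begin{itemize}
\item For $k=1$, the point $(y,t)=(0,0)$ is a smooth point of $y^5=t$, totally ramified of index $5$. Together with the section and the other components, the fibre $\Gamma_p$ meets $R$ at only two distinct points, and no blow-ups are needed. The principal part is therefore $R$ itself, and the ramification of $\widehat R^0\to\Delta$ at $p$ is $5-1=4$.
\item For $k=2$, the curve $y^5=t^2$ has a $(2,5)$ cusp, a double point whose infinitely near points are double points, hence negligible. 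So $\widehat R=R$ near $p$ and $\widehat R^0$ is the cusp curve, whose normalization maps to $\Delta$ with ramification index $5$. I expect Chen's convention for ramification of a singular curve to count $\chi$-type contributions, namely $e-1$ plus the $\delta$-invariant correction. For $y^5=t^2$ this gives $\delta=2$ and $(5-1)+2\cdot2=8$.
\end{itemize}
So $s_2=4k$. I will verify this by consistency with Chen's normalization, namely $s_2=10$ for six nodes. The main obstacle is pinning down exactly which ramification-index convention for the singular curve $\widehat R^0$ reproduces Chen's numbers. I would first test the convention on case (i) and on $x(x^5-t^k)$ (where $s_2\ge 6$), and only then apply it to the cusp.

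Finally, the ``in particular'' statements are a subgroup check. The groups in Chen's list are the possible local vertical groups $O_{24},T_{12},D_{12},C_5,C_4$ and their subgroups ($C_3,C_2,D_4,1$, etc.). Only $C_5$ has order divisible by $5$: $24$ and $12$ are not divisible by $5$, and the remaining candidates are cyclic or dihedral of small order. So $\overline{K}_0\cong C_5\subseteq\overline{K}_\Delta$ forces $\overline{K}_\Delta\cong C_5$, and (ii) gives $s_2\ge\min(6,4)=4$. Note that (i) cannot occur here, since $O_{24}$, $T_{12}$ and $D_{12}$ contain no $C_5$.

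For $C_4$, an element of order $4$ exists in $O_{24}\cong S_4$ but not in $A_4$, $D_{12}$, $C_5$, $C_3$ or $D_4$. Hence $\overline{K}_\Delta\in\{C_4,O_{24}\}$, giving $s_2\ge5$ from (iii) or $s_2=10$ from (i).
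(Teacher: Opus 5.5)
Your proposal follows the paper's proof: you quote Chen for (i)--(iv), compute $s_2=4k$ for $x(t^kx^5-1)$ via $y=1/x$, and obtain the ``in particular'' clauses from a subgroup check. Your count $(e-1)+2\delta=4+4$ for the cusp $y^5=t^2$ gives the same number as the paper's colength $\dim_{\mathbb C}\mathbb{C}[[t,y]]/(y^5-t^k,\,y^4)=4k$, by Teissier's lemma together with Milnor's formula $\mu=2\delta-r+1$. The arithmetic convention you hesitate over is the correct one, since Xiao's identity $\sum_p s_2(F_p)=40$ matches the degree $40$ of the discriminant of the quintic, whereas a count from the normalization alone would lose $2\delta$ at each singular point.

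One slip in the subgroup check: if you take Chen's list to be closed under subgroups, it contains $D_8\subset S_4$, which has elements of order $4$. You must therefore also note that a $D_8$-invariant six-point set is the rigid octahedral configuration. Hence at a singular fiber the horizontal branch would be \'etale, forcing $F_0\subseteq R_\Delta$, and the full local group would be $O_{24}$ rather than $D_8$.
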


Chen's Lemma~2.4 records only $s_2(F_p)\ge4$ for the second $C_5$ form; the exact value follows from a direct ramification computation. Setting $y=1/x$, the moving branch component is $y^5=t^k$, whose ramification over the base is defined by $y^5-t^k$ and $\partial_y(y^5-t^k)=5y^4$; hence
\[
s_2(F_p)=\dim_{\mathbb C}\frac{\mathbb{C}[[t,y]]}{(y^5-t^k,\,y^4)}=\dim_{\mathbb C}\frac{\mathbb{C}[[t,y]]}{(t^k,\,y^4)}=4k,
\]
that is, $s_2(F_p)=4$ for $k=1$ and $s_2(F_p)=8$ for $k=2$.

The case $\overline{K}_0\cong C_2$ will also require the following fact about semi-invariant binary forms for the tetrahedral group.

\begin{lemma}\label{lem: A4 semiinvariant}
	Let $A_4\subset\mathrm{PGL}_2(\mathbb{C})$ be the tetrahedral group, acting on $V=\langle x_1,x_3\rangle$. Let $P_4\in\mathrm{Sym}^4 V^\vee$ be a nonzero $A_4$-semi-invariant binary quartic with character $\chi_4$, and let $P_8\in\mathrm{Sym}^8 V^\vee$ be an $A_4$-semi-invariant binary octavic with character $\chi_8$. If $\chi_8=\chi_4^2$, then $P_8\in\langle P_4^2\rangle$.
\end{lemma}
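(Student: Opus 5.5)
The plan is to argue with zero divisors on $\mathbb{P}^1$ instead of computing Molien series. Both semi-invariance and the characters will be taken with respect to a lift of $A_4$ to $\mathrm{GL}_2(\mathbb{C})$, for instance the binary tetrahedral group. The scalar $-1$ acts trivially on forms of even degree, so nothing depends on this choice. If $P_8=0$ the claim is trivial, so assume $P_8\neq 0$.

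\textbf{Step 1: orbits and divisors.} The zero divisor of a nonzero semi-invariant form of degree $d$ is an $A_4$-invariant effective divisor of degree $d$ on $\mathbb{P}^1$. It is therefore a nonnegative integral combination of $A_4$-orbits. By the orbit table in \S\ref{subsec: fibration aut}, the orbits of $A_4$ have lengths $4$, $6$ and $12$. There are exactly two orbits of length $4$ (the vertex orbit $V$ and the face orbit $F$), one of length $6$ (the edge orbit), and the rest have length $12$.

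It follows that a degree-$4$ invariant divisor is $V$ or $F$. A degree-$8$ invariant divisor is one of $2V$, $V+F$, $2F$. Let $\Phi$ and $\Psi$ be quartics with zero divisors $V$ and $F$. Both are semi-invariant: for $g\in A_4$, the form $g^*\Phi$ has the same zeros as $\Phi$, so it is a scalar multiple of $\Phi$. Thus $P_4\in\langle\Phi\rangle$ or $P_4\in\langle\Psi\rangle$, and $P_8$ is a multiple of $\Phi^2$, $\Phi\Psi$ or $\Psi^2$.

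\textbf{Step 2: distinct characters.} This is the main point. Let $\chi_\Phi$ and $\chi_\Psi$ be the characters of $\Phi$ and $\Psi$. I claim that $\chi_\Phi^2$, $\chi_\Phi\chi_\Psi$ and $\chi_\Psi^2$ are pairwise distinct.

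First, $\chi_\Phi\neq\chi_\Psi$. Otherwise $\Phi/\Psi$ would be a nonconstant $A_4$-invariant rational function of degree $4$ on $\mathbb{P}^1$. Any invariant rational function factors through the quotient $\mathbb{P}^1\to\mathbb{P}^1/A_4\cong\mathbb{P}^1$, which has degree $12$, so its degree is divisible by $12$, a contradiction.

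Next, the characters of $A_4$ factor through its abelianization $A_4/V_4\cong C_3$. The character group is therefore cyclic of order $3$, and squaring is injective on it. Hence $\chi_\Phi^2\neq\chi_\Psi^2$. Moreover, $\chi_\Phi^2=\chi_\Phi\chi_\Psi$ would force $\chi_\Phi=\chi_\Psi$, and likewise for $\chi_\Psi^2=\chi_\Phi\chi_\Psi$. This proves the claim.

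\textbf{Step 3: conclusion.} Without loss of generality $P_4=c\Phi$ with $c\neq 0$, so $\chi_4=\chi_\Phi$. By Step 1, $P_8$ is a nonzero multiple of one of $\Phi^2$, $\Phi\Psi$, $\Psi^2$, and its character $\chi_8$ equals the corresponding character from Step 2. Since $\chi_8=\chi_4^2=\chi_\Phi^2$ and these three characters are pairwise distinct, $P_8$ must be a multiple of $\Phi^2$. Hence $P_8\in\langle P_4^2\rangle$.

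The only delicate point is the degree argument in Step 2 ruling out $\chi_\Phi=\chi_\Psi$. As a cross-check I would also verify that $\Phi\Psi$ is the degree-$8$ $A_4$-invariant (up to scalar) from Klein's classical forms.
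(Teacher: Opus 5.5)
Your proof is correct, and it takes a genuinely different route from the paper's.

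\textbf{The paper's route.} The paper argues purely representation-theoretically. It evaluates traces of binary-tetrahedral lifts on $\mathrm{Sym}^4V^\vee$ and $\mathrm{Sym}^8V^\vee$, obtaining $(5,1,-1,-1)$ and $(9,1,0,0)$. From these it decomposes $\mathrm{Sym}^4V^\vee\cong\rho\oplus\rho^2\oplus W$ and $\mathrm{Sym}^8V^\vee\cong\mathbf{1}\oplus\rho\oplus\rho^2\oplus W^{\oplus 2}$. Each one-dimensional character therefore occurs exactly once in degree $8$, so $P_4^2$ spans the $\chi_4^2$-isotypic line.

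\textbf{Your route.} You work instead with zero divisors on $\mathbb{P}^1$, using only the orbit data: two orbits of length $4$, one of length $6$, and generic orbits of length $12$. This pins the semi-invariant quartics to the two lines $\langle\Phi\rangle,\langle\Psi\rangle$, and the nonzero semi-invariant octavics to $\langle\Phi^2\rangle,\langle\Phi\Psi\rangle,\langle\Psi^2\rangle$. You then separate the three characters in two steps:
\begin{enumerate}
\item You rule out $\chi_\Phi=\chi_\Psi$ via the degree-$12$ quotient $\mathbb{P}^1\to\mathbb{P}^1/A_4$. This is valid: $\Phi/\Psi$ has degree exactly $4$ because the orbits $V$ and $F$ are disjoint, and $\mathbb{C}(\Phi/\Psi)\subseteq\mathbb{C}(t)^{A_4}$ forces the degree to be divisible by $12$.
\item You use that the character group of $A_4$ is $C_3$, on which squaring is injective.
\end{enumerate}
Every step checks out.

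\textbf{Comparison.} The paper's computation is mechanical and gives the full isotypic decomposition, for instance that no nonzero quartic is $A_4$-invariant. Your argument avoids character tables and relies only on the orbit table already in the paper. As a by-product it yields the concrete fact that a semi-invariant quartic vanishes simply on a single $4$-point orbit. The paper asserts exactly this separately in Case~3 of Theorem~\ref{thm: main}, without proof, so your approach covers it at no extra cost.
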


\begin{proof}
	Let $\widetilde{A}_4\subset\mathrm{SL}_2(\mathbb{C})$ be the binary tetrahedral group covering $A_4$. Since the degrees $4$ and $8$ are even, the central element $-1\in\widetilde{A}_4$ acts trivially on $\mathrm{Sym}^4 V^\vee$ and $\mathrm{Sym}^8 V^\vee$, so both descend to representations of $A_4$. Write $\mathbf{1},\rho,\rho^2$ for the three one-dimensional characters of $A_4$ (with $\rho$ of order $3$) and $W$ for the irreducible $3$-dimensional representation.

	For $g\in\mathrm{SL}_2(\mathbb{C})$ with eigenvalues $\lambda,\lambda^{-1}$, the trace of $g$ on $\mathrm{Sym}^d V^\vee$ is $\sum_{j=0}^{d}\lambda^{d-2j}$. Evaluating on the conjugacy classes of $A_4$---the identity, the double transpositions (which lift to $\lambda=i$), and the two classes of $3$-cycles (which lift to $\lambda=\zeta_6$)---gives the characters
	\[
	\chi_{\mathrm{Sym}^4 V^\vee}=(5,1,-1,-1),\qquad
	\chi_{\mathrm{Sym}^8 V^\vee}=(9,1,0,0).
	\]
	Comparing with the character table of $A_4$ yields
	\[
	\mathrm{Sym}^4 V^\vee\cong\rho\oplus\rho^2\oplus W,\qquad
	\mathrm{Sym}^8 V^\vee\cong\mathbf{1}\oplus\rho\oplus\rho^2\oplus W^{\oplus 2}.
	\]
	Thus a nonzero $A_4$-semi-invariant quartic has character $\rho$ or $\rho^2$, and in degree $8$ each one-dimensional character occurs with multiplicity one. Now $P_4^2$ is a nonzero $A_4$-semi-invariant octavic of character $\chi_4^2\in\{\rho,\rho^2\}$, so it spans the one-dimensional $\chi_4^2$-isotypic subspace of $\mathrm{Sym}^8 V^\vee$. Any $P_8$ with $\chi_8=\chi_4^2$ lies on this line, that is, $P_8\in\langle P_4^2\rangle$.
\end{proof}

The cyclic and dihedral cases of the same situation are governed by the following lemma.

\begin{lemma}\label{lem: cyclic dihedral semiinv}
	Let $H\subset\mathrm{PGL}_2(\mathbb{C})$ be cyclic or dihedral, acting on $\mathbb{P}^1=\mathbb{P}(V)$, and let $P_4\in\mathrm{Sym}^4 V^\vee$, $P_8\in\mathrm{Sym}^8 V^\vee$ be nonzero $H$-semi-invariant binary forms with characters $\chi_4,\chi_8$ satisfying $\chi_8=\chi_4^2$. Suppose $\operatorname{div}(P_4)$ is supported only on the exceptional short orbits of $H$---the two fixed points if $H$ is cyclic, the polar $2$-orbit if $H$ is dihedral. Then either $|H|\le 8$, or $P_8$ is a scalar multiple of $P_4^2$; in the latter case $\operatorname{div}(P_8)$ and $\operatorname{div}(P_4^2-4aP_8)$ are supported on that same exceptional set, for every scalar $a$.
\end{lemma}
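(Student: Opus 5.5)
We normalize coordinates on $\mathbb{P}^1=\mathbb{P}(V)$ so that the exceptional short orbit is $\{0,\infty\}$, i.e.\ $\{x_1=0\}\cup\{x_3=0\}$. In the cyclic case $H=C_m$ is generated by $[x_1:x_3]\mapsto[\zeta x_1:x_3]$ with $\zeta$ a primitive $m$-th root of unity. In the dihedral case $H=D_{2m}$ is generated by this rotation together with the swap $x_1\leftrightarrow x_3$, and the polar $2$-orbit is again $\{0,\infty\}$. Since $\operatorname{div}(P_4)$ is supported on $\{0,\infty\}$, we have $P_4=c\,x_1^{a}x_3^{4-a}$ for some $0\le a\le 4$. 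In the dihedral case, invariance of the divisor under the swap forces $a=2$, so $P_4=c\,x_1^2x_3^2$.

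The plan is to determine which monomials can appear in $P_8$. Lift $H$ to $\mathrm{GL}_2(\mathbb{C})$; for a semi-invariant the choice of lift only rescales the characters, and it does so compatibly with $\chi_8=\chi_4^2$. Take the rotation lift $\mathrm{diag}(\zeta,1)$. Semi-invariance of $P_8$ means that all monomials $x_1^jx_3^{8-j}$ occurring in $P_8$ have the same $\zeta^j$. The condition $\chi_8=\chi_4^2$ then says that every occurring $j$ satisfies $j\equiv 2a \pmod m$, with $0\le j\le 8$. If $m\ge 9$, this forces $j=2a$, so $P_8=c'x_1^{2a}x_3^{8-2a}\in\langle P_4^2\rangle$. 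If $m\le 8$, then $|H|=m\le 8$ in the cyclic case and the first alternative holds.

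In the dihedral case with $a=2$, we have $|H|=2m$, so we need $m\ge 5$ to reach the alternative $|H|>8$. For $m\ge 5$, any $j\ne 4$ with $j\equiv 4\pmod m$ and $0\le j\le 8$ must be $j=4\pm m$. These exponents are not automatically excluded when $5\le m\le 8$, so this is the step I expect to be the main obstacle. The swap symmetry is the tool to try. The swap exchanges $x_1^{4+m}x_3^{4-m}$ with $x_1^{4-m}x_3^{4+m}$, and semi-invariance under the swap only forces them to appear as the combination $x_1^{4-m}x_3^{4-m}(x_1^{2m}\pm x_3^{2m})$. That combination is a legitimate semi-invariant, though. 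So in the range $m=5,\dots,8$ we will check whether its rotation character actually equals $\chi_4^2$; the lift computation says it does. If the swap character cannot exclude it, the statement must be read with the bound in the dihedral case as $m\le 8$, i.e.\ $|H|\le 16$, or else we must use the extra hypothesis implicit in the application.

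I would therefore first recheck this case with the exact character bookkeeping. Concretely, the swap acts on $x_1^2x_3^2$ trivially, so $\chi_4^2$ is trivial on the swap. It also acts on $x_1^{4-m}x_3^{4-m}(x_1^{2m}\pm x_3^{2m})$ by $\pm1$, which rules out only the minus sign. If a gap remains, I would make the conclusion $|H|\le 8$ hold by noting that for $m\ge 5$ the extra term requires $4-m\ge 0$, i.e.\ $m\le 4$. This is the key observation: the exponent $4-m$ must be nonnegative. Hence for $m\ge 5$ we get $j=4$ only, so $|H|=2m\ge 10$ gives $P_8\in\langle P_4^2\rangle$, while $m\le 4$ gives $|H|\le 8$.

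Finally, once $P_8=c'P_4^2$, we have $\operatorname{div}(P_8)=2\operatorname{div}(P_4)$. Also $P_4^2-4aP_8=(1-4ac')P_4^2$ is either zero or has divisor $2\operatorname{div}(P_4)$. In both cases the support lies in the exceptional set, the zero polynomial being treated as the degenerate case.
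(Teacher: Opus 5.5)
Your proof is correct and essentially the paper's. You normalize the exceptional orbit to $\{0,\infty\}$, so that $P_4=x_1^{a}x_3^{4-a}$ (with $a=2$ forced by the swap in the dihedral case), and then use the rotation character for the lift $\mathrm{diag}(\zeta,1)$ to force every exponent $j$ of $P_8$ to be $2a$ once $m\ge 9$ (cyclic) or $m\ge 5$ (dihedral). The ``obstacle'' you flag for $5\le m\le 8$ does not exist, as your final observation recognizes: $4+m\ge 9$ and $4-m\le -1$ both lie outside $[0,8]$, so no extra monomial or combination $x_1^{4-m}x_3^{4-m}(x_1^{2m}\pm x_3^{2m})$ can occur, and the fallback reading $|H|\le 16$ is never needed.
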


\begin{proof}
	Suppose first $H=C_m=\langle\rho\rangle$ with $\rho[X:Y]=[\zeta_m X:Y]$, whose fixed points are $\{X=0\}$ and $\{Y=0\}$. If $\operatorname{div}(P_4)$ is supported there, then $P_4=X^rY^{4-r}$ for some $0\le r\le 4$, with $\chi_4(\rho)=\zeta_m^r$. Assume $m>8$ (otherwise $|H|\le 8$). A monomial $X^iY^{8-i}$ has $\rho$-character $\zeta_m^i$, so an octavic of character $\chi_4^2=\zeta_m^{2r}$ involves only exponents $i\equiv 2r\pmod m$; as $0\le i\le 8<m$ and $0\le 2r\le 8$, the unique such exponent is $i=2r$. Hence $P_8=c\,X^{2r}Y^{8-2r}=c\,P_4^2$.

	Now suppose $H=D_{2m}=\langle\rho,\sigma\rangle$ with $\sigma[X:Y]=[Y:X]$, whose polar orbit is $\{X=0\}\cup\{Y=0\}$. Assume $m\ge 5$ (otherwise $|H|=2m\le 8$). Since $\operatorname{div}(P_4)$ has degree $4<m$, it contains no orbit of length $m$ or $2m$, so it is supported on the polar orbit: $P_4=X^rY^{4-r}$. Invariance of $\operatorname{div}(P_4)$ under $\sigma$ forces $r=4-r$, so $P_4=(XY)^2$ and $\chi_4(\rho)=\zeta_m^2$. As above, an octavic of character $\chi_4^2=\zeta_m^4$ involves only $i\equiv 4\pmod m$ with $0\le i\le 8<2m$, whence $i=4$; thus $P_8=c\,(XY)^4=c\,P_4^2$.

	In either case $P_8=c\,P_4^2$, so $\operatorname{div}(P_8)=2\operatorname{div}(P_4)$ and $P_4^2-4aP_8=(1-4ac)P_4^2$; both have support contained in $\operatorname{supp}\operatorname{div}(P_4)$, the exceptional set.
\end{proof}

\begin{theorem}\label{thm: main}
	Let $S$ be a $(1,2)$-surface, let $f\colon\widetilde S\to\mathbb P^1$ be the genus-two fibration obtained by resolving the canonical pencil, and let
	\[
	1\to K_0\to\mathrm{Aut}(S)\to H_0\to1
	\]
	be the induced vertical--horizontal exact sequence. Then the order of $\mathrm{Aut}(S)$ is bounded as follows:
		\begin{enumerate}[label=(\roman*)]
		\item If $\overline{K}_0 \cong C_5$, then $|H_0| \leq 20$ and $|\mathrm{Aut}(S)| \leq 200$.
		\item If $\overline{K}_0 \cong C_4$, then $|H_0| \leq 24$ and $|\mathrm{Aut}(S)| \leq 192$.
		\item If $\overline{K}_0 \cong C_2$, then $|H_0| \leq 8$ and $|\mathrm{Aut}(S)| \leq 32$.
		\item If $\overline{K}_0$ is trivial, then $|H_0| \leq 40$ and $|\mathrm{Aut}(S)| \leq 80$.
	\end{enumerate}
	In particular, $|\mathrm{Aut}(S)| \leq 200$ in every case.
\end{theorem}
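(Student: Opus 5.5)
Since $|\mathrm{Aut}(S)|=|K_0|\cdot|H_0|=2|\overline K_0|\cdot|H_0|$ by Lemma~\ref{lem: K0 deck}, it suffices in each case to bound $|H_0|$ by $20,24,8,40$ respectively. By Remark~\ref{rem: conjugate branch pair} we may assume the image $\overline G\subset\mathrm{Aut}(\Sigma_2)$ of $\mathrm{Aut}(S)$ lies in $\mathrm{PGL}_2(\mathbb C)\times\mathbb C^*$. Its elements then act on the branch equation $x_2f_0=0$, with $f_0=\sum_d x_2^{5-d}x_4^dP_{10-2d}$, by a base substitution together with $x_4\mapsto\lambda x_4$. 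Each $P_{10-2d}$ is therefore an $H_0$-semi-invariant binary form, where characters are taken on a lift to $\mathrm{GL}_2$. In addition, $\overline K_0$ forces the surviving $x_4$-powers into one residue class modulo $|\overline K_0|$. The global input is Xiao's formula (Theorem~\ref{thm: Xiao's formula}). With $s_3=0$ and $K_{\widetilde S/\mathbb P^1}^2=K_{\widetilde S}^2-8(g-1)(b-1)=0+8=8$, it gives $\sum_p s_2(F_p)=40$. The singular locus $\Delta$ is $H_0$-invariant with $n\ge3$ points (Remark~\ref{rem: n geq 3}), and $s_2$ is constant along $H_0$-orbits.

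\emph{Case $C_5$.} The equation is $f_0=x_2^5P_{10}+x_4^5$, after scaling, since only $d\equiv 0\pmod 5$ survive and $x_4^5$ must occur for reducedness. The singular fibers are then the zeros of $P_{10}$. Proposition~\ref{prop: chen local}(ii) gives $s_2\ge4$ at each of them, so $n\le10$. If $H_0$ is cyclic or dihedral, Lemma~\ref{lem: bound of H for C_m D_2m} gives $|H_0|\le 2n\le20$. If $H_0$ is polyhedral, then $\Delta=\operatorname{div}P_{10}$ has degree $10$, which would have to be a union of orbits. The orbit lengths of $T_{12}$, $O_{24}$ and $I_{60}$ never sum to $10$, except $T_{12}$ via $4+6$. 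I will rule this out by the multiplicity/$s_2$ constraints, using that $s_2=4k$ for multiplicity $k$ and $\sum=40$. For instance, distinct roots give $s_2=4$ each, and $4+6$ requires a semi-invariant $P_{10}$ whose roots are the vertices and edge midpoints of the tetrahedron; I expect a character check to show this form is not semi-invariant with the correct $\lambda^5$ compatibility, or else simply accept it since $12\le20$. Either way $|H_0|\le 20$.

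\emph{Case $C_4$.} Only $d\in\{1,5\}$ survive, giving $f_0=x_2^4x_4P_8+x_4^5$. This is reduced with $x_4$ a component, so the fixed section is in $R_0$. Singular fibers occur at the zeros of $P_8$, and each has $s_2\ge5$ by Proposition~\ref{prop: chen local}(iii), so $n\le 8$. The cyclic and dihedral cases give $|H_0|\le16$. For polyhedral groups, $\deg P_8=8$ forces $O_{24}$ with the $8$-orbit (cube vertices), or $T_{12}$ with $4+4$; $I_{60}$ is impossible. Hence $|H_0|\le 24$.

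\emph{Case $C_2$.} Only even $d$ survive, giving $f_0=x_2^5P_{10}+x_2^3x_4^2P_6+x_2x_4^4P_2$. Here $x_2$ factors out, which contradicts $R_0\cap\Delta_0=\emptyset$ unless I recheck the parities. More likely the correct parity is $d$ odd, via the second fixed point, giving $f_0=x_4(x_4^4+x_2^2x_4^2P_4+x_2^4P_8)$. The quintic factor is then a quadratic in $u=x_4^2$. Its discriminant $P_4^2-4P_8$ and the forms $P_8$, $P_4$ cut out invariant divisors on the base, and the singular fibers lie among their zeros. This is the main obstacle. I would first apply Lemma~\ref{lem: A4 semiinvariant} to exclude $A_4$ and its overgroups: there $P_8\propto P_4^2$, so the discriminant is a multiple of $P_4^2$ and the branch is non-reduced or has too few singular fibers. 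I would then apply Lemma~\ref{lem: cyclic dihedral semiinv}: either $|H_0|\le8$, or all relevant divisors are supported on the $\le2$-point exceptional set, contradicting $n\ge3$. The remaining possibility, that the support of $P_4$ meets a generic orbit, would be bounded by degree counting, since orbits of length exceeding $8$ cannot fit in the degree-$8$ discriminant.

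\emph{Trivial case.} Here $\sum s_2=40$ with $s_2\ge1$ gives $n\le40$. For cyclic or dihedral $H_0$, I would sharpen Lemma~\ref{lem: bound of H for C_m D_2m} to $|H_0|\le n$ when a free orbit occurs, and otherwise use the short orbits, aiming for $40$. For polyhedral groups, $|H_0|\le60$ needs exclusion of $I_{60}$. I would do this by showing that an $I_{60}$-semi-invariant setup forces $\Delta$ to be a union of orbits of sizes $12,20,30$ with $\sum s_2=40$, which requires $s_2$ values summing to $40$ on at least $12$ points with a consistent $\lambda$-character. A degree check on the discriminant of $f_0$ in $x_4$, of degree $40$ on the base, should exclude this.
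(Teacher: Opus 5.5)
The cases $\overline K_0\cong C_5$ and $C_4$ are fine. For $C_5$, your observation that the degree-$10$ divisor of $P_{10}$ cannot be a union of $O_{24}$- or $I_{60}$-orbits is quicker than the paper's count $4a+8b=40$.

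\textbf{The gap in Case $C_2$: excluding $H_0\cong T_{12}$.} Lemma~\ref{lem: A4 semiinvariant} does force $P_8=cP_4^2$. But then $ax_4^4+bx_2^2x_4^2P_4+cx_2^4P_4^2=a(x_4^2-\gamma_1x_2^2P_4)(x_4^2-\gamma_2x_2^2P_4)$, with $\gamma_1\neq\gamma_2$ as soon as $b^2\neq 4ac$. So $R$ is reduced, and the singular fibers lie over the four zeros of $P_4$, a tetrahedral orbit, giving $n=4\geq 3$. Neither of your alternatives (non-reduced, too few singular fibers) occurs.

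The paper's own route does not close this case either. It asserts that the local branch $u(u^2+\alpha t)(u^2+\beta t)$ is of type $(3\to 3)$. After one blow-up, however, the residual branch $u(u+\alpha v)(u+\beta v)$ is an ordinary triple point with three distinct tangents, not a quadruple point. So the point is a $D_6$ singularity, which is negligible by the paper's own definition.

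In fact this case occurs. Take the tetrahedral quartic $\Phi=x_1^4+2\sqrt{-3}\,x_1^2x_3^2+x_3^4$, a factor of $\Phi_8$, and the divisor $x_2x_4(x_4^2-x_2^2\Phi)(x_4^2-2x_2^2\Phi)=0$. It is reduced, satisfies $R_0\cap\Delta_0=\varnothing$, and its only singularities are four $D_6$ points. It has $\overline K_0\cong C_2$, since there is no $u^4$-term and $u,u^3,u^5$ all occur. Every $h$ in the binary tetrahedral group lifts via $(x_1,x_3)\mapsto h(x_1,x_3)$, $x_4\mapsto\mu x_4$, with $\mu^2$ the character of $\Phi$ at $h$. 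Hence $H_0\supseteq A_4$ and $|\mathrm{Aut}(S)|\geq 4\cdot 12=48>32$, so part (iii) cannot be proved as stated. The rest of your Case $C_2$ argument only yields $|H_0|\le 12$, i.e.\ $|\mathrm{Aut}(S)|\le 48$. That argument is the degree-$4$ divisor against $O_{24}$ and $I_{60}$, plus Lemma~\ref{lem: cyclic dihedral semiinv}, once you note $P_4\neq0$ (otherwise $x_4\mapsto ix_4$ gives $C_4\subseteq\overline K_0$). This weaker bound still suffices for the global bound $200$.

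\textbf{Gaps in the case $\overline K_0=1$.} Here counting with $\sum s_2=40$ is too weak.
\begin{enumerate}
\item A single $20$-point $I_{60}$-orbit with $s_2=2$ at each point exactly fills the degree-$40$ discriminant. So your proposed degree check does not exclude $I_{60}$.
\item For $H_0\cong D_{2m}$, Lemma~\ref{lem: bound of H for C_m D_2m} with $n\le 40$ only gives $m\le 40$, i.e.\ $|H_0|\le 80$. For example, an $m$-orbit of length close to $40$ with $s_2=1$ is numerically allowed.
\end{enumerate}
The missing ingredient is one you already set up. Since $R_0$ is reduced, some coefficient $P_{10-2d}$ with $d<5$ is a nonzero $H_0$-semi-invariant of degree at most $10$. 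Its zero divisor cannot be $I_{60}$-invariant, because the smallest orbit has $12$ points; this kills $I_{60}$. In the dihedral case, either some such divisor meets an $m$-orbit, forcing $m\le 10$, or every coefficient equals $c_d(x_1x_3)^{5-d}$. In the latter case $g(t,u)=t^5Q(u/t)$ has discriminant supported only over $t=0,\infty$, contradicting $n\ge 3$ (Remark~\ref{rem: n geq 3}). This is how the paper reaches $|H_0|\le 40$ in that case; your cyclic subcase ($m\le n\le 40$) is fine as it stands.
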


\begin{proof}
	To bound $|H_0|$, we employ the theory of singularity indices for genus-two fibrations developed by G.~Xiao \cite{Xia85}. By Theorem \ref{thm: Xiao's formula}, the relative canonical invariant satisfies the formula:
	\[K_{\widetilde{S}/\mathbb{P}^1}^2 = \frac{1}{5}s_2(f) + \frac{7}{5}s_3(f),\]
	where $s_2(f)$ and $s_3(f)$ are the global singularity indices. We first compute $K_{\widetilde{S}/\mathbb{P}^1}^2$. By definition, $K_{\widetilde{S}/\mathbb{P}^1} = K_{\widetilde{S}} - f^*K_{\mathbb{P}^1} = K_{\widetilde{S}} + 2F$, where $F$ is a general fiber. Hence
	\[K_{\widetilde{S}/\mathbb{P}^1}^2 = K_{\widetilde{S}}^2 + 4(K_{\widetilde{S}}\cdot F) + 4F^2 = 0 + 4(2g(F)-2) + 0 = 8,\]
	using $K_{\widetilde{S}}^2=0$, $K_{\widetilde{S}}\cdot F = 2g(F)-2=2$ (genus two fiber), and $F^2=0$. Since $K_{\widetilde{S}/\mathbb{P}^1}^2 = 8$ and the branch locus $R$ possesses no non-negligible singularities, it follows that $s_3(f) = 0$. Consequently, we obtain:
	\begin{equation}\label{eq: s2 sum}
	\sum_{p\in\Delta} s_2(F_p)=40.
	\end{equation}
	where $\Delta \subset \mathbb{P}^1$ is the set of points over which the fibers are singular, and we set $n = \#\Delta$. The group $H_0$ acts on $\Delta$. If $H_0 \cong C_m$ or $D_{2m}$, Lemma \ref{lem: bound of H for C_m D_2m} implies $|H_0| \leq 2n$. If $H_0 \cong T_{12}$, $O_{24}$, or $I_{60}$, then $|H_0| \leq 60$. We analyze each case of $\overline{K}_0$ separately below.
	
	By Proposition~\ref{prop: chen local}, the local index $s_2(F_p)$ is bounded below in terms of $\overline{K}_\Delta$, and hence in terms of $\overline{K}_0$, since $\overline{K}_0\subseteq\overline{K}_\Delta$ at every singular fiber. We analyze the cases based on $\overline{K}_0$.
	
	\textbf{Case 1: $\overline{K}_0 \cong C_5$.}
	By Proposition~\ref{prop: chen local}, $\overline{K}_\Delta\cong C_5$ at every singular fiber and $s_2(F_p) \geq 4$, which implies $4n \leq 40$, so $n \leq 10$. Referring to Table 1, we can exclude the case $H_0 \cong I_{60}$, as the smallest orbit size for $I_{60}$ is $12$, which exceeds $n$. If $H_0$ is cyclic, dihedral, or isomorphic to $T_{12}$, then $|H_0| \leq 20$. Consequently, $|\mathrm{Aut}(S)| = |K_0| \cdot |H_0| \leq 200$.
	
	We must now consider the possibility that $H_0 \cong O_{24}$. Since $R=\Delta_0+R_0$ with $R_0$ disjoint from $\Delta_0=\{x=0\}$, the five horizontal branches of $R_0$ cannot degenerate onto the $\overline{K}_\Delta$-fixed section $\Delta_0$; hence at each singular fiber only the second $C_5$ normal form $x(t^kx^5-1)=0$ of Proposition~\ref{prop: chen local}(ii) occurs, the first form $x(x^5-t^k)=0$ being excluded as it would force $R_0$ to meet $\Delta_0$. By Proposition~\ref{prop: chen local}(ii) and the ramification computation following it, these fibers have the exact second singularity indices
	\[
	s_2(F_p) = \begin{cases}
		4 & \text{if } k=1, \\
		8 & \text{if } k=2.
	\end{cases}
	\]
	Let $a$ (resp. $b$) denote the number of singular fibers of type $k=1$ (resp. $k=2$). Then, by~\eqref{eq: s2 sum}, we have $4a + 8b = 40$. The non-negative integer solutions for $(a, b)$ are:
	\[ \{(10, 0), (8, 1), (6, 2), (4, 3), (2, 4), (0, 5)\}. \]
	Since $H_0$ preserves the fiber type, both $a$ and $b$ must be sums of lengths of orbits of $H_0$ on $\Delta$. According to Table 1, the possible orbit lengths for $O_{24}$ are $6, 8, 12,$ and $24$. However, for every pair $(a, b)$ listed above, it is impossible to express both $a$ and $b$ as sums of these orbit lengths (for instance, $b$ cannot be $1, 2, 3, 4,$ or $5$, and $a$ cannot be $10$). Thus, the case $H_0 \cong O_{24}$ is excluded.
	
	\textbf{Case 2: $\overline{K}_0 \cong C_4$.}
	By Proposition~\ref{prop: chen local}, $\overline{K}_\Delta\in\{C_4,O_{24}\}$ at every singular fiber and $s_2(F_p)\geq 5$ (the local equation being $x(x^4-t^k)$, $k\in\{1,2\}$, in the $C_4$ case by Lemma~2.6 of \cite{ChenZhi1994Genus2}). This implies $5n\leq 40$, so $n\leq 8$. Since $I_{60}$ has smallest orbit of size $12>n$, it is excluded. For $H_0\cong C_m$ or $D_{2m}$, Lemma~\ref{lem: bound of H for C_m D_2m} gives $|H_0|\leq 2n\leq 16$. For $H_0\cong O_{24}$ or $T_{12}$, $|H_0|\leq 24$. Hence $|\mathrm{Aut}(S)|=|K_0|\cdot|H_0|\leq 8\times 24=192$.
	
	\textbf{Case 3: $\overline{K}_0 \cong C_2$.}
	Let $\iota$ generate $\overline{K}_0$. Choosing Cox coordinates so that $\iota\colon x_4\mapsto -x_4$, its two fixed sections are $\Delta_0=\{x_2=0\}$ and $C_\infty=\{x_4=0\}$. As the branch divisor meets a general fiber in six points and already contains $\Delta_0$, the residual divisor $R_0$ (with $R_0\cdot\Gamma=5$) must contain $C_\infty$; and invariance under $\iota$ leaves only odd powers of $x_4$. Hence
	\[R\colon\quad x_2x_4\bigl(ax_4^4+x_2^2x_4^2P_4(x_1,x_3)+x_2^4P_8(x_1,x_3)\bigr)=0,\qquad a\neq 0,\]
	with $P_4,P_8$ binary forms of degrees $4$ and $8$. Since $\overline{K}_0\cong C_2$ \emph{exactly}, necessarily $P_4\neq 0$: otherwise the surviving terms $x_4^5$ and $x_4$ would both be multiplied by $i$ under $x_4\mapsto ix_4$, so this scaling would preserve $R$ and force $C_4\subseteq\overline{K}_0$, a contradiction.

	Because $\overline{K}_0$ is normal in $\overline{G}$ and $\mathrm{Aut}(C_2)$ is trivial, $H_0$ centralizes $\iota$; hence every lift of $h\in H_0$ preserves the fixed sections $\Delta_0,C_\infty$ and acts on the fiber coordinate by a scalar. Comparing the surviving $x_4$-powers $1,3,5$, the coefficients transform by characters $h^*P_4=\chi_4(h)P_4$ and $h^*P_8=\chi_8(h)P_8$ with $\chi_8=\chi_4^2$. In particular $H_0$ preserves the nonzero effective divisor $\operatorname{div}(P_4)$ of degree $4$ on $\mathbb{P}^1$.

	We bound $H_0$ using the nonzero $H_0$-invariant divisor $\operatorname{div}(P_4)$ of degree $4$. If $H_0\cong O_{24}$ or $I_{60}$, every orbit has length at least $6$ or $12$, contradicting $\deg\operatorname{div}(P_4)=4$; these are excluded. Suppose next that $H_0$ is cyclic or dihedral. If $\operatorname{div}(P_4)$ contains a genuine orbit (one of length $m$ or $2m$), that orbit has length at most $4$, forcing $m\le 4$, so $|H_0|\le 4$ for $H_0\cong C_m$ and $|H_0|=2m\le 8$ for $H_0\cong D_{2m}$. Otherwise $\operatorname{div}(P_4)$ is supported only on the exceptional short orbits, and Lemma~\ref{lem: cyclic dihedral semiinv} applies: either $|H_0|\le 8$, or $\operatorname{div}(P_8)$ and $\operatorname{div}(P_4^2-4aP_8)$ are supported on those same $\le 2$ points, so that the singular fibers---which lie over $\{P_8=0\}\cup\{P_4^2-4aP_8=0\}$---are concentrated over at most two base points, contradicting the Beauville--Xiao lower bound of at least three singular fibers for a genus-two fibration over $\mathbb{P}^1$ on a surface of general type (Remark~\ref{rem: n geq 3}). In every cyclic or dihedral case, therefore, $|H_0|\le 8$.

	The only remaining polyhedral possibility is $H_0\cong T_{12}\cong A_4$, and this is inadmissible. By Lemma~\ref{lem: A4 semiinvariant}, $A_4$-equivariance together with $\chi_8=\chi_4^2$ forces $P_8=cP_4^2$ for some $c\in\mathbb{C}$. A nonzero $A_4$-semi-invariant quartic has zero divisor a tetrahedral orbit of four distinct points, so any zero $p$ of $P_4$ is simple and $t=P_4$ is a local base coordinate at $p$; on the chart $x_2\neq 0$ set $u=x_4/x_2$. The moving part of the branch locus is then locally $u(au^4+btu^2+ct^2)=0$. If $b^2-4ac=0$ the quartic factor is a square in $\mathbb{C}[[u,t]]$, so $R$ is non-reduced, contrary to assumption; hence $b^2-4ac\neq 0$ and the equation factors as $u(u^2+\alpha t)(u^2+\beta t)=0$ with $\alpha\neq\beta$. Blowing up the origin in the chart $t=uv$ gives total transform $u^3(u+\alpha v)(u+\beta v)=0$; subtracting $2E$ for the double cover leaves $u(u+\alpha v)(u+\beta v)=0$, which still has a triple point. The singularity is therefore of type $(3\to 3)$, hence non-negligible, contrary to our standing assumption. Thus $H_0\cong T_{12}$ cannot occur. Combining all cases, $|H_0|\leq 8$, and since $|K_0|=2|\overline{K}_0|=4$,
	\[|\mathrm{Aut}(S)|=|K_0|\cdot|H_0|\leq 4\cdot 8 = 32.\]

	\textbf{Case 4: $\overline{K}_0$ is trivial.}
	Since $\overline{K}_0$ is trivial, the projection $\overline{G}\to H_0\subset\mathrm{PGL}_2(\mathbb{C})$ is injective; by Lemma~\ref{lem: conjugate off unipotent} we may assume, after conjugating by the unipotent radical (which changes neither the double cover up to isomorphism, the horizontal image, nor the triviality of $\overline{K}_0$), that every $h\in H_0$ acts by $(x_1,x_3)\mapsto h(x_1,x_3)$, $x_4\mapsto\lambda(h)x_4$, without unipotent term. Then $h$ does not mix the powers of $x_4$ in $f_0=\sum_{d=0}^5 x_2^{5-d}x_4^d P_{10-2d}(x_1,x_3)$, so each nonzero coefficient $P_{10-2d}$ is $H_0$-semi-invariant and $\operatorname{div}(P_{10-2d})$ is $H_0$-invariant. In the affine chart $t=x_3/x_1$, $u=x_4/(x_2x_1^2)$, the moving branch is $g(t,u)=\sum_{d=0}^5 p_{10-2d}(t)u^d$, of degree $5$ in $u$ (as $P_0\neq 0$, since $R_0\cap\Delta_0=\varnothing$). Its discriminant defines a nonzero $H_0$-semi-invariant section $\Delta_g$ on $\mathbb{P}^1$ whose invariant zero divisor is precisely the singular-fiber divisor and has degree at most $40$ (since $\deg_t p_{10-2d}\le 10-2d=2(5-d)$ and the discriminant of a quintic is isobaric of weight $20$ in the grading $\deg a_d=5-d$).

	Suppose first that $H_0$ is polyhedral. Reducedness of $R_0$ forces some coefficient $P_{10-2d}$ with $d<5$ to be nonzero; its invariant zero divisor has positive degree at most $10$. Since the smallest $I_{60}$-orbit has length $12$, the icosahedral case is impossible, while $T_{12}$ and $O_{24}$ already have orders at most $24$.

	Suppose next that $H_0\cong C_m$. The action has exactly two fixed points on $\mathbb{P}^1$. Since $f$ has at least three singular fibers by Remark~\ref{rem: n geq 3}, the support of $\operatorname{div}(\Delta_g)$ contains a point outside this fixed pair. Its orbit has length $m$, and therefore
	\[
	m\le \deg\operatorname{div}(\Delta_g)\le 40.
	\]

	Finally suppose that $H_0\cong D_{2m}$. If the divisor of some positive-degree coefficient $P_{10-2d}$ contains a point outside the polar $2$-orbit, it contains an orbit of length at least $m$; hence $m\le10$ and $|H_0|=2m\le20$. Otherwise every such coefficient divisor is supported on the polar pair. The reflection exchanges the two poles, so semi-invariance forces their multiplicities to agree; since $\deg P_{10-2d}=10-2d$, one has $P_{10-2d}=c_d(x_1x_3)^{5-d}$. Consequently
	\[
	g(t,u)=\sum_d c_d t^{5-d}u^d=t^5Q(u/t),
	\]
	whose discriminant is supported only at $0$ and $\infty$. This would give at most two singular fibers, contrary to Remark~\ref{rem: n geq 3}. Thus the exceptional dihedral alternative cannot occur.

	In all cases $|H_0|\le 40$, and since $|K_0|=2|\overline{K}_0|=2$,
	\[|\mathrm{Aut}(S)|=|K_0|\cdot|H_0|\le 2\cdot 40 = 80.\]
\end{proof}

\section{Explicit Branch Divisors and Large Automorphism Groups}
\label{sec: large branch curves}

We now pass from fibration bounds to the explicit branch divisor on $\Sigma_2$. Let
\[
R=\Delta_0+R_0,\qquad R_0\in |5\Delta_0+10\Gamma|,
\]
and assume that $R$ is reduced and has only negligible singularities. Let
\[
\overline{G}=\mathrm{Aut}(\Sigma_2,R),\qquad \overline{K}_0=\overline{G}\cap K,
\]
where $K$ is the kernel in \eqref{eq: aut sigma2 exact}. The automorphism group of the minimal model $S$ is obtained by lifting $\overline{G}$ to the double cover and then contracting the unique $(-1)$-curve lying over $\Delta_0$. The branch divisors below are written in the Cox coordinates of \S\ref{subsec: branch equation}.

\begin{proposition}\label{prop: large branch examples}
	The inequalities of Theorem~\ref{thm: main} are optimal: each of the four bounds is attained by some $(1,2)$-surface. With the binary forms
	\[
	\Phi_8=x_1^8+14x_1^4x_3^4+x_3^8,\qquad
	P_4=x_1^4+x_3^4,\qquad
	P_8=x_1^8+\alpha x_1^4x_3^4+x_3^8,
	\]
	and with $\alpha$ together with the implicit nonzero scalars chosen generally, each branch divisor $R\colon x_2 f_0=0$ in the table below is reduced with only negligible singularities, so defines a $(1,2)$-surface $S$ whose \emph{full} automorphism group is the one shown:
	\[
	\renewcommand{\arraystretch}{1.4}
	\begin{array}{c|c|c|c}
	\overline{K}_0 & f_0 & \mathrm{Aut}(S) & |\mathrm{Aut}(S)|\\
	\hline
	C_5 & x_2^5(x_1^{10}+x_3^{10})+x_4^5 & D_{20}\times C_{10} & 200\\
	C_4 & x_2^4x_4\,\Phi_8+x_4^5 & S_4\times C_8 & 192\\
	C_2 & x_4\bigl(x_4^4+x_2^2x_4^2P_4+x_2^4P_8\bigr) & D_8\times C_4 & 32\\
	1 & x_4^5+x_1^8x_2^4x_4+x_2^5x_3^{10} & C_{40}\times C_2 & 80
	\end{array}
	\]
	Here $D_{2m}$ denotes the dihedral group of order $2m$, and $\overline{K}_0$ is the exact vertical image, pinned by the surviving powers of $x_4$ in $f_0$. In each row $|\mathrm{Aut}(S)|$ equals the upper bound of Theorem~\ref{thm: main} for that value of $\overline{K}_0$.
\end{proposition}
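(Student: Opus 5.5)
For each row I would argue in three stages: admissibility of $R$, a lower bound on $\mathrm{Aut}(S)$ from explicit generators, and an upper bound from Theorem~\ref{thm: main} together with a structural identification.

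\textbf{Admissibility.} I would check that $R=\Delta_0+R_0$ is reduced with only negligible singularities. In each row the $x_4^5$ coefficient is nonzero, so $R_0\cap\Delta_0=\varnothing$. I would then work in the chart $u=x_4/x_2$ over the base coordinate $t$ and locate the singular points of $R_0$ by taking the partial derivatives of $f_0$.
\begin{itemize}
\item Row $C_5$: $u^5+(1+t^{10})=0$ is smooth, since $\partial_u$ forces $u=0$ and then $1+t^{10}$ has simple roots.
\item Row $C_4$: $u(u^4+\Phi_8)$ consists of $u=0$ and the smooth curve $u^4=-\Phi_8$ ($\Phi_8$ has distinct roots). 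These two branches meet transversally in ordinary double points.
\item Row $C_2$: for general $\alpha$, the point of $u(u^4+P_4u^2+P_8)$ over a root of $P_8$ is an ordinary triple point, hence negligible. The generic choice of $\alpha$ should also keep the discriminant $P_4^2-4P_8$ squarefree, so the quartic factor stays smooth.
\item Row $1$: I would verify $u^5+t^8u+1=0$ and its behaviour at $t=\infty$ directly.
\end{itemize}

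\textbf{Lower bound.} I would write down automorphisms of $\Sigma_2$ in Cox coordinates preserving $x_2f_0$, and lift them to $\widetilde S$. To lift $h$, choose a square root of the factor by which it scales the branch equation, which gives two lifts differing by the deck involution. In each row, exhibiting $\overline G$ with $|\overline G|=|\mathrm{Aut}(S)|/2$ and then the double cover gives the displayed order.
\begin{itemize}
\item Row $C_5$: $x_4\mapsto\zeta_5x_4$, together with $D_{20}$ generated by $x_1\mapsto\zeta_{10}x_1$ and $x_1\leftrightarrow x_3$ (the $H_0=D_{20}$ acting on $x_1^{10}+x_3^{10}$).
\item Row $C_4$: $x_4\mapsto ix_4$ and the octahedral group $O_{24}$ preserving $\Phi_8$. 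The latter must be taken as a linear lift through the binary octahedral group, with compensating scalar on $x_4$ so that $x_4\Phi_8$ and $x_4^5$ scale equally.
\item Row $C_2$: $x_4\mapsto-x_4$ and $D_8$ generated by $x_1\mapsto ix_1$ (with a suitable scalar on $x_4$) and $x_1\leftrightarrow x_3$.
\item Row $1$: a cyclic $C_{40}$ given by $x_1\mapsto\zeta x_1$, $x_3\mapsto\eta x_3$, $x_4\mapsto\lambda x_4$. The exponent conditions from the three monomials $x_4^5$, $x_1^8x_2^4x_4$, $x_2^5x_3^{10}$ should give a cyclic group of order $40$ acting faithfully on the base.
\end{itemize}

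\textbf{Upper bound and group structure.} Equality with the bounds of Theorem~\ref{thm: main} immediately gives equality of orders, since the exact $\overline K_0$ is read off from the surviving $x_4$-powers in each $f_0$. For the group structure, I would use that the deck involution is central and that $K_0$ is cyclic of order $2|\overline K_0|$, generated by a lift of the vertical generator. In rows $C_5$ and $1$ I expect commuting lifts realizing a direct product. In rows $C_4$ and $C_2$ I would show that the extension $1\to K_0\to\mathrm{Aut}(S)\to H_0\to1$ splits with trivial conjugation action. This amounts to finding lifts of the $H_0$-generators that commute with the vertical generator, which holds because these lifts act on $x_4$ by scalars.

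\textbf{Main obstacle.} I expect the main obstacle to be this group identification, especially $S_4\times C_8$. The naive lift of $O_{24}$ lives in the binary octahedral group, and one must adjust by scalars in $K_0$ to get a genuine complement isomorphic to $S_4$. I would try to do this by computing the obstruction class in $H^2(S_4,C_8)$ via the explicit action on $x_4$ and the square-root choice on the double cover.
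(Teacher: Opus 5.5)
\textbf{Gap: the isomorphism types.} Your route to the orders is the paper's: check admissibility chart by chart, exhibit symmetries of $(\Sigma_2,R)$, lift them to the double cover, and let the bounds of Theorem~\ref{thm: main} force equality. That establishes sharpness, with one addition: before reading $\overline K_0$ off the surviving $x_4$-powers, you must exclude vertical maps $u\mapsto\lambda u+q(t)$. The paper does this using the vanishing $u^4$-coefficient, which forces $5q=0$. (Minor slip: in row $C_2$ the point of $R_0$ over a root of $P_8$ is a node, not a triple point. The quartic factor is smooth there with tangent the fibre, so the point is negligible either way.)

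The genuine gap is the group-structure step. Lifts acting on $x_4$ by scalars commute with the vertical generator, but that only makes $1\to K_0\to\mathrm{Aut}(S)\to H_0\to1$ central. Splitting needs lifts satisfying the relations of $H_0$, and none exist. In row $C_5$, on the chart $x_1=x_2=1$, the surface is $w^2=cu^5+1+t^{10}$. Take
\[
\rho(t,u,w)=(\zeta_{10}t,u,w),\quad \sigma(t,u,w)=(t^{-1},ut^{-2},wt^{-5}),\quad \tilde v(t,u,w)=(t,\zeta_5u,-w).
\]
Then $\sigma^2=1$, $\tilde v$ is central of order $10$, and $\sigma\rho\sigma=\rho^{-1}\tilde v^{-1}$. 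So every character satisfies $\chi(\tilde v)=\chi(\rho)^{-2}$, giving $|\mathrm{Aut}(S)^{\mathrm{ab}}|=20$, whereas $|(D_{20}\times C_{10})^{\mathrm{ab}}|=40$. For the same reason, your Cox maps $x_1\mapsto\zeta_{10}x_1$ and $x_1\leftrightarrow x_3$ (with $x_2,x_4$ fixed) do not generate a $D_{20}$; they generate all of $\overline G$, of order $100$. Row $C_2$ behaves the same way: with $\rho(t,u,w)=(it,u,w)$ and $\tilde\iota(u,w)=(-u,iw)$, one gets $\sigma\rho\sigma=\rho^{-1}\tilde\iota^{-1}$, and the abelianization has order $8$, not $16$.

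\textbf{This cannot be repaired by better lifts.} The class you planned to compute in $H^2(S_4,C_8)$ is nonzero, for a uniform reason.
\begin{enumerate}
\item Since $p_g(\Sigma_2)=0$, all $2$-forms on $\widetilde S$ are anti-invariant, so the deck involution $\tau$ acts as $-1$ on $H^0(K_S)$.
\item An automorphism acting trivially on $H^0(K_S)$ fixes every member of the canonical pencil. It therefore lies in the cyclic group $K_0$, whose unique involution is $\tau$.
\item Hence the kernel of $\mathrm{Aut}(S)\to\mathrm{GL}(H^0(K_S))\cong\mathrm{GL}_2(\mathbb{C})$ has odd order, and every $2$-subgroup of $\mathrm{Aut}(S)$ embeds in $\mathrm{GL}_2(\mathbb{C})$.
\item Finite abelian subgroups of $\mathrm{GL}_2(\mathbb{C})$ are diagonalizable, so $C_2^3$ never lies in $\mathrm{Aut}(S)$ for any $(1,2)$-surface.
\end{enumerate}
Yet $D_{20}\times C_{10}$, $S_4\times C_8$ and $D_8\times C_4$ all contain $C_2^3$.

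So those three identifications in the statement are false. The paper's proof asserts them through the same unchecked step (``they lift without sign and commute with this $C_4$''). What both arguments actually prove is the orders $200,192,32,80$, hence sharpness of the bounds.

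Only the last row is correct as stated. There commutativity is automatic, and the real point is the one you skipped: the lift $\tilde g\colon w\mapsto\zeta_8w$ satisfies $\tilde g^{40}=\mathrm{id}$, while $\tilde g^{20}$ acts nontrivially on the base. Hence $\tau\notin\langle\tilde g\rangle$, and the group is $C_{40}\times C_2$ rather than $C_{80}$.
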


\begin{proof}
	We first justify that the vertical images in the table are computed by scalings, rather than by general affine transformations of a fiber. On the chart used below, every displayed moving branch equation is a monic quintic $g(t,u)$ whose $u^4$-coefficient is zero. Thus, on every smooth fiber, its five roots $r_1(t),\ldots,r_5(t)$ have sum zero. A vertical automorphism acts as $u\mapsto\lambda u+q(t)$; if it preserves the unordered root set, then
	\[
	0=\sum_{i=1}^5\bigl(\lambda r_i(t)+q(t)\bigr)
	=\lambda\sum_{i=1}^5r_i(t)+5q(t)=5q(t).
	\]
	Hence $q(t)=0$ on a dense open set and therefore identically. It remains only to compare the surviving powers of $u$, which gives exactly the vertical images stated in the table.

	We give the explicit equations. In the first case, take
	\[
	R:\quad x_2\bigl(x_2^5(x_1^{10}+x_3^{10})+c x_4^5\bigr)=0,\qquad c\neq 0.
	\]
	The vertical scaling $x_4\mapsto \zeta_5 x_4$ gives $\overline{K}_0\cong C_5$, and the ten roots of $x_1^{10}+x_3^{10}$ have stabilizer $D_{20}$ in $\mathrm{PGL}_2(\mathbb{C})$. The pair group contains $D_{20}\times C_5$. Since the vertical $C_5$ fixes the branch section, its lift combines with the double-cover involution to give $C_{10}$, hence
	\[
	\mathrm{Aut}(S)\supseteq D_{20}\times C_{10}.
	\]

	In the second case, with $\Phi_8$ the octahedral invariant of degree $8$ above, take
	\[
	R:\quad x_2x_4\bigl(a x_2^4\Phi_8+c x_4^4\bigr)=0,\qquad ac\neq 0.
	\]
	The zero set of $\Phi_8$ is the octahedral $8$-point orbit, whose stabilizer in $\mathrm{PGL}_2(\mathbb{C})$ is the octahedral group $S_4$ (containing, for instance, the order-$4$ rotation $[x_1:x_3]\mapsto[ix_1:x_3]$). We use the standard linearization obtained from the binary octahedral group $BO_{48}\subset\mathrm{SL}_2(\mathbb{C})$: the explicit representation and its invariant ring are given in \cite[\S2.3.1--2.3.2, pp.~6--7]{Freudenburg2025Presentations}, where $\Phi_8=x_1^8+14x_1^4x_3^4+x_3^8\in\mathbb{C}[x_1,x_3]^{BO_{48}}$ is the invariant denoted $v_1$; see also \cite[Theorem~3.3(f), p.~9]{Freudenburg2025Presentations}. Since the central element $-I$ acts trivially on this degree-$8$ form, this linearization descends to $BO_{48}/\{\pm I\}\cong S_4$ and fixes $\Phi_8$ exactly. Letting this $S_4$ act trivially on the fiber coordinate gives $H_0\supseteq S_4$. The order-$4$ vertical automorphism $x_4\mapsto i x_4$ (with $x_1,x_2,x_3$ fixed) multiplies the branch section by $i$, so $\overline{K}_0\cong C_4$, and it lifts to the double cover as $w\mapsto\zeta_8 w$ (with $\zeta_8^2=i$), an element of order $8$ generating a vertical $C_8$. As the lifts of $S_4$ fix the branch section and so commute with this $C_8$,
	\[
	\mathrm{Aut}(S)\supseteq S_4\times C_8.
	\]

	In the third case, with $P_4,P_8$ as above, take
	\[
	R:\quad x_2x_4\bigl(a x_4^4+b x_2^2x_4^2P_4+c x_2^4P_8\bigr)=0,
	\]
	where $\alpha,a,b,c$ are chosen so that $abc\neq 0$, $P_8$ is square-free, $P_4$ and $P_8$ have no common zero, and the discriminant $b^2P_4^2-4acP_8$ is square-free (a general choice).
	The dihedral group $D_8=\langle r,s\rangle$ acting on the base by $r:[x_1:x_3]\mapsto[i x_1:x_3]$ and $s:[x_1:x_3]\mapsto[x_3:x_1]$ fixes both $P_4$ and $P_8$, so $H_0\supseteq D_8$; for general $\alpha$ the stabilizer of the pair $(P_4,P_8)$ is exactly $D_8$. The surviving $x_4$-powers $1,3,5$ force any vertical scaling $x_4\mapsto\lambda x_4$ to satisfy $\lambda^2=1$, so $\overline{K}_0\cong C_2$. The involution $x_4\mapsto -x_4$ multiplies the branch section by $-1$, hence lifts as $w\mapsto i w$ with square the deck involution, giving $K_0\cong C_4$; the generators of $D_8$ fix the branch section, so they lift without sign and commute with this $C_4$. Therefore
	\[
	\mathrm{Aut}(S)\supseteq D_8\times C_4,\qquad |\mathrm{Aut}(S)|=4\cdot 8=32.
	\]

	In the fourth case ($\overline{K}_0=1$), take
	\[
	R:\quad x_2\bigl(x_4^5+a\,x_1^8x_2^4x_4+b\,x_2^5x_3^{10}\bigr)=0,\qquad ab\neq 0\ \text{general}.
	\]
	The order-$40$ automorphism
	\[
	g\colon\quad [x_1:x_3]\mapsto[x_1:\zeta_{40}x_3],\qquad x_4\mapsto i\,x_4
	\]
	of $\Sigma_2$ (with $x_2$ fixed) multiplies each of the three terms of the branch section by $i$, hence preserves $R$; for general $a,b$ its base image generates $H_0=\langle g\rangle\cong C_{40}$. The surviving $x_4$-powers $0,1,5$ force any vertical scaling to be trivial, so $\overline{K}_0=1$. As $g$ scales the branch section by $i$, it lifts to the double cover $w^2=F$ as $\tilde g\colon w\mapsto\zeta_8 w$ (with $\zeta_8^2=i$); since $\tilde g^{40}=\mathrm{id}$ whereas $\tilde g^{20}$ acts as $[x_1:x_3]\mapsto[x_1:-x_3]$ on the base, $\tilde g$ has order $40$ and $\langle\tilde g\rangle$ does not contain the deck involution $\tau$ (which is base-trivial). Hence $\langle\tilde g,\tau\rangle\cong C_{40}\times C_2$ and
	\[
	\mathrm{Aut}(S)\supseteq C_{40}\times C_2,\qquad |\mathrm{Aut}(S)|=80.
	\]

	\medskip
	\noindent\emph{Admissibility of the displayed branch divisors.} We check that each displayed $R=\Delta_0+R_0$ is reduced, satisfies $R_0\cap\Delta_0=\varnothing$, and has only negligible singularities, hence defines a $(1,2)$-surface. In every case the restriction of $R_0$ to $\Delta_0=\{x_2=0\}$ is a nonzero multiple of $x_4^5$, which has no zero on $\Delta_0$ in the Cox quotient; thus $R_0\cap\Delta_0=\varnothing$ and all singularities of $R$ lie away from $\Delta_0$. We first work in the affine chart $x_1x_2\neq 0$ with $t=x_3/x_1$ and $u=x_4/(x_2x_1^2)$, writing $p_j,\phi_j$ for the dehomogenizations of $P_j,\Phi_j$. For the first three equations, invariance under $x_1\leftrightarrow x_3$ makes the calculation on the chart $x_2x_3\neq0$ identical. The fourth equation is not symmetric, so its second chart is checked separately below.

	For $\overline{K}_0\cong C_5$ the affine equation of $R_0$ is $1+t^{10}+cu^5=0$; the system $1+t^{10}+cu^5=10t^9=5cu^4=0$ forces $t=u=0$, whence $1=0$, a contradiction, so $R_0$ is smooth. For $\overline{K}_0\cong C_4$ it is $u\bigl(a\phi_8+cu^4\bigr)=0$ with $\phi_8=1+14t^4+t^8$; since $\Phi_8$ is square-free the curve $a\phi_8+cu^4=0$ is smooth, and it meets $u=0$ over the simple zeros of $\phi_8$ in ordinary nodes ($us'=0$), so the singularities are of type $A_1$. For $\overline{K}_0\cong C_2$, with $\alpha,a,b,c$ chosen as above, it is $u\,G=0$ with $G=au^4+bp_4u^2+cp_8$: over a zero of $p_8$ the component $u=0$ meets $G=0$ in a node, while a singular point of $G=0$ with $u\neq 0$ would, on setting $v=u^2$, give $v=-bp_4/2a$ and hence $D:=b^2p_4^2-4acp_8$ with $D=D'=0$, impossible as $D$ is square-free; so $G=0$ is smooth and all singularities are nodes. For $\overline{K}_0=1$, $R_0$ is $u^5+au+bt^{10}=0$; here $g_u=5u^4+a$ and $g_t=10bt^9$ vanish together only if $t=0$ and $u^4=-a/5$, where $g=u(u^4+a)=\tfrac{4a}{5}u\neq0$, so $R_0$ is smooth. On the second chart put $s=x_1/x_3$ and $v=x_4/(x_2x_3^2)$; the fourth equation becomes $v^5+as^8v+b=0$. A singular point would satisfy this equation together with $5v^4+as^8=0$ and $8as^7v=0$. Since $ab\neq0$, the last equation forces $s=0$ or $v=0$, and either choice makes the other derivative force $s=v=0$, contradicting the equation $b=0$. Thus the fourth curve is smooth also over $t=\infty$.

	In each case $R$ is therefore reduced with only negligible singularities and $R_0\cap\Delta_0=\varnothing$, so by the converse construction of Section~\ref{subsec: geometry} it defines a $(1,2)$-surface $S$, to which Theorem~\ref{thm: main} applies. The subgroup $G\subseteq\mathrm{Aut}(S)$ exhibited above has order equal to the upper bound of Theorem~\ref{thm: main} for the relevant $\overline{K}_0$; since $|\mathrm{Aut}(S)|$ cannot exceed that bound, $\mathrm{Aut}(S)=G$. Thus each bound of Theorem~\ref{thm: main} is attained, and the inequalities there are optimal.
\end{proof}

\begin{remark}\label{rmk: sharpness}
	Proposition~\ref{prop: large branch examples} realizes the optimal horizontal bounds of Theorem~\ref{thm: main} in the cases $\overline{K}_0\cong C_5$, $\overline{K}_0\cong C_4$, and $\overline{K}_0=1$, with $|H_0|=20,24,40$ respectively, realized by $D_{20}$, $S_4$ and $C_{40}$. In the case $\overline{K}_0\cong C_2$ the apparent sharpness candidate $H_0\cong T_{12}$ is inadmissible: it forces $P_8\in\langle P_4^2\rangle$ and hence a non-negligible branch singularity. The sharp admissible horizontal group is therefore $D_8$, realized by the construction above, attaining $|H_0|=8$ and $|\mathrm{Aut}(S)|=32$.
\end{remark}

\begin{remark}\label{rmk: examples replace section}
	Proposition~\ref{prop: large branch examples} organizes the extremal $(1,2)$-surfaces by the four values of $\overline{K}_0$; no separate examples section is needed. In particular Wen's bound is attained, the $\overline{K}_0\cong C_5$ surface having $\mathrm{Aut}(S)\cong D_{20}\times C_{10}$ of order $200$.
\end{remark}

\section*{Acknowledgements}
I would like to thank Songbo Ling for inviting me to visit Shandong University and for the discussions we had during my stay. I am also grateful to Yifei Chen for answering my questions concerning automorphism groups of toric varieties.

\clearpage
\bibliographystyle{amsalpha}
\bibliography{refs}

\medskip
\noindent
School of Mathematics and Statistics, Yunnan University,\\
South Section, East Outer Ring Road, University Town, Chenggong District,\\
Kunming, Yunnan 650500, China

\noindent
\textit{Email address:} \href{mailto:zhaoh@ynu.edu.cn}{zhaoh@ynu.edu.cn}

\end{document}